\def\ssign{\textsection\nobreak\hspace{1pt plus 0.3pt}}
\let\origsection=\section 
\def\mysection{\@mystartsection{section}{1}\z@{.7\linespacing\@plus\linespacing}{.5\linespacing}{\normalfont\scshape\centering\ssign}}
\def\section{\@ifstar{\origsection*}{\mysection}}
\def\appendix{\par\c@section\z@ \c@subsection\z@
	\let\sectionname\appendixname
	\let\section=\origsection
	\def\thesection{\@Alph\c@section}}
\def\@mystartsection#1#2#3#4#5#6{\if@noskipsec \leavevmode \fi
	\par \@tempskipa #4\relax
	\@afterindenttrue
	\ifdim \@tempskipa <\z@ \@tempskipa -\@tempskipa \@afterindentfalse\fi
	\if@nobreak \everypar{}\else
	\addpenalty\@secpenalty\addvspace\@tempskipa\fi
	\@dblarg{\@mysect{#1}{#2}{#3}{#4}{#5}{#6}}}
\def\@mysect#1#2#3#4#5#6[#7]#8{\edef\@toclevel{\ifnum#2=\@m 0\else\number#2\fi}\ifnum #2>\c@secnumdepth \let\@secnumber\@empty
	\else \@xp\let\@xp\@secnumber\csname the#1\endcsname\fi
	\@tempskipa #5\relax
	\ifnum #2>\c@secnumdepth
	\let\@svsec\@empty
	\else
	\refstepcounter{#1}\edef\@secnumpunct{\ifdim\@tempskipa>\z@ \@ifnotempty{#8}{\@nx\enspace}\else
		\@ifempty{#8}{.}{\@nx\enspace}\fi
	}\@ifempty{#8}{\ifnum #2=\tw@ \def\@secnumfont{\bfseries}\fi}{}\protected@edef\@svsec{\ifnum#2<\@m
		\@ifundefined{#1name}{}{\ignorespaces\csname #1name\endcsname\space
		}\fi
		\@seccntformat{#1}}\fi
	\ifdim \@tempskipa>\z@ \begingroup #6\relax
	\@hangfrom{\hskip #3\relax\@svsec}{\interlinepenalty\@M #8\par}\endgroup
	\ifnum#2>\@m \else \@tocwrite{#1}{#8}\fi
	\else
	\def\@svsechd{#6\hskip #3\@svsec
		\@ifnotempty{#8}{\ignorespaces#8\unskip
			\@addpunct.}\ifnum#2>\@m \else \@tocwrite{#1}{#8}\fi
	}\fi
	\global\@nobreaktrue
	\@xsect{#5}}
\def\rmlabel{\upshape({\kern-0.0833em\itshape\roman*\kern+0.0833em})}
\def\nlabel{\upshape(\kern-0.0833em{\itshape\arabic*}\kern0.0833em)}
\def\alabel{\upshape(\textit{\alph*})}
\def\Alabel{\upshape({\kern-0.0833em\itshape\Alph*\kern0.1111em})}
\let\setminus=\smallsetminus
\let\to=\lra
\def\moverlay{\mathpalette\mov@rlay}
\def\mov@rlay#1#2{\leavevmode\vtop{   \baselineskip\z@skip \lineskiplimit-\maxdimen
		\ialign{\hfil$\m@th#1##$\hfil\cr#2\crcr}}}
\newcommand{\charfusion}[3][\mathord]{
	#1{\ifx#1\mathop\vphantom{#2}\fi
		\mathpalette\mov@rlay{#2\cr#3}
	}
	\ifx#1\mathop\expandafter\displaylimits\fi}
\newcommand{\dcup}{\charfusion[\mathbin]{\cup}{\cdot}}
\newcommand*\linenomathpatch[1]{\expandafter\pretocmd\csname #1\endcsname {\linenomath}{}{}\expandafter\pretocmd\csname #1*\endcsname{\linenomath}{}{}\expandafter\apptocmd\csname end#1\endcsname {\endlinenomath}{}{}\expandafter\apptocmd\csname end#1*\endcsname{\endlinenomath}{}{}}
\newcommand*\linenomathpatchAMS[1]{\expandafter\pretocmd\csname #1\endcsname {\linenomathAMS}{}{}\expandafter\pretocmd\csname #1*\endcsname{\linenomathAMS}{}{}\expandafter\apptocmd\csname end#1\endcsname {\endlinenomath}{}{}\expandafter\apptocmd\csname end#1*\endcsname{\endlinenomath}{}{}}
\let\linenomathAMS\linenomathWithnumbers
\patchcmd\linenomathAMS{\advance\postdisplaypenalty\linenopenalty}{}{}{}
\let\linenomathAMS\linenomathNonumbers
\theoremstyle{plain}
\newtheorem{theorem}{Theorem}[section]
\crefname{theorem}{Theorem}{Theorems}
\crefname{proposition}{Proposition}{Propositions}
\newtheorem{corollary}[theorem]{Corollary}
\crefname{corollary}{Corollary}{Corollaries}
\newtheorem{lemma}[theorem]{Lemma}
\crefname{lemma}{Lemma}{Lemmata}
\newtheorem{conjecture}[theorem]{Conjecture}
\crefname{conjecture}{Conjecture}{Conjectures}
\crefname{problem}{Problem}{Problem}
\newtheorem{claim}[theorem]{Claim}
\crefname{claim}{Claim}{Claims}
\crefname{observation}{Observation}{Observations}
\crefname{setup}{Setup}{Setups}
\newtheorem{fact}[theorem]{Fact}
\crefname{fact}{Fact}{Facts}
\crefname{algorithm}{Algorithm}{Algorithms}
\crefname{remark}{Remark}{Remarks}
\crefname{example}{Example}{Examples}
\theoremstyle{definition}
\newtheorem{definition}[theorem]{Definition}
\crefname{definition}{Definition}{Definitions}
\crefname{construction}{Construction}{Constructions}
\crefname{question}{Question}{Questions}
\numberwithin{equation}{section}
\crefname{section}{Section}{Sections}
\crefname{appendix}{Appendix}{Appendix}
\crefname{figure}{Figure}{Figures}
\def\COMMENT#1{}
\let\COMMENT=\footnote          
\let\polishlcross=\l
\def\l{\ifmmode\ell\else\polishlcross\fi}
\newcommand{\eps}{\varepsilon}
\renewcommand{\rho}{\varrho}
\newcommand{\sm}{\setminus}
\renewcommand{\subset}{\subseteq}
\DeclareMathOperator{\sh}{Sh}
\newcommand{\shij}{\sh_{i\rightarrow j}}
\newcommand{\shji}{\sh_{j\rightarrow i}} 
\let\th\relax
\DeclareMathOperator{\th}{\hbar}
\DeclareMathOperator{\crs}{cr}
\DeclareMathOperator{\eg}{{\normalfont\textsc{eg}}}
\def\tand{\ \text{and}\ }
\def\qand{\quad\text{and}\quad}
\newcommand{\cG}{\mathcal{G}}
\newcommand{\cM}{\mathcal{M}}
\DeclareMathSymbol{\meister}{\mathbin}{symbols}{"03}
\begin{document}
	
	\title{Tight Hamiltonicity from dense links of triples}
	
	\author[R.~Lang]{Richard Lang}
	\address {
		Fachbereich Mathematik,
		Universität Hamburg,
		Hamburg, Germany
	}
	
	\curraddr{Departament de Matemàtiques, Universitat Politècnica de Catalunya, Barcelona, Spain}
	\email{richard.lang@upc.edu}
	
	\author[M.~Schacht]{Mathias Schacht}
	\address {
		Fachbereich Mathematik,
		Universität Hamburg,
		Hamburg, Germany
	}
	\email{schacht@math.uni-hamburg.de}
	
	\author[J.~Volec]{Jan Volec}
	\address{
		Katedra teoretické informatiky, Fakulta informačních technologií,
		České vysoké učení technické v Praze,
		Prague, Czech Republic
	}
	\email{jan@ucw.cz}
	

	\begin{abstract}
		We show that for all $k\geq 4$, $\eps >0$, and $n$ sufficiently large, every $k$-uniform hypergraph on $n$ vertices in which each set of $k-3$ vertices is contained in at least $(5/8 + \eps) \binom{n}{3}$ edges contains a tight Hamilton cycle.
		This is asymptotically best possible.
	\end{abstract}
	
	
	\maketitle
	
	\vspace{-0.6cm}

	\section{Introduction}
	
	Our starting point is Dirac's theorem, which states that any graph $G$
	on $n \geq 3$ vertices and minimum degree $\delta(G) \geq n/2$
	contains a Hamilton cycle.
	Moreover, the constant~$1/2$ is best possible as exhibited by simple
	constructions.
	
	Over the past twenty-five years, this result has been extended to the hypergraph setting.
	Formally, a \emph{$k$-uniform hypergraph} (\emph{$k$-graph} for short)
	$G$ has a set of \emph{vertices}~$V(G)$ and a set of \emph{edges}~$E(G)$, where each edge consists of $k$ vertices, and we denote the
	number of edges~$|E(G)|$ by $e(G)$. For $1 \leq d \leq k-1$, the
	\emph{minimum $d$-degree of $G$}, denoted $\delta_d(G)$, is the maximum~$m$ such that every set of $d$ vertices is contained in at least $m$
	edges. A \emph{tight cycle} $C \subset G$ is a subgraph whose vertices
	are cyclically ordered such that every $k$ consecutive vertices form an
	edge. Moreover,~$C$ is \emph{Hamilton} if it spans all the vertices of $G$.
	We define the \emph{Dirac constant}~$\th_d^{(k)}$ as the (asymptotic)
	minimum $d$-degree threshold for tight Hamiltonicity. More precisely,
	$\th_d^{(k)}$ is the infimum $\varsigma \in [0,1]$ such that for every $\eps
	>0$ and $n$ sufficiently large, every $n$-vertex $k$-graph $G$ with
	$\delta_d(G) \geq (\varsigma + \eps)\binom{n-d}{k-d}$ contains a tight
	Hamilton cycle. So for example, Dirac's theorem implies that
	$\th_1^{(2)} = 1/2$.
	
	Minimum $d$-degree thresholds for tight Hamilton cycles were first
	investigated by Katona and Kierstead~\cite{KK99}, who observed that
	$\th_{k-1}^{(k)} \geq 1/2$ for all $k\geq 2$ and conjectured this to be
	tight (see~\cref{fig:constructions}).
	This conjecture was resolved by Rödl, Ruciński, and
	Szemerédi~\cites{RRS06,RRS08} by introducing the \emph{absorption
		method} in this setting. Since then the focus has shifted
	to degree types $d$ below $k-1$. After advances for nearly spanning cycles by Cooley and Mycroft~\cite{CM17},
	it was shown by Reiher, Rödl, Ruciński, Schacht, and
	Szemerédi~\cite{RRR19} that $\th_1^{(3)}=5/9$, which resolves the case
	of $d=k-2$ when $k=3$. Subsequently, this was generalised to
	$k=4$~\cite{PRRRSS20} and finally, Polcyn, Reiher, R\"odl, and
	Schülke~\cite{PRRS21} and, independently, Lang and
	Sanhueza-Matamala~\cite{LS22} established $\th_{k-2}^{(k)}=5/9$ for all
	$k \geq 3$.
	
	We focus on the case $d=k-3$.
	Han and Zhao~\cite{HZ16} provided a construction that
	shows $\th_{k-3}^{(k)} \geq 5/8$ (see \cref{fig:constructions}), which is believed to be optimal.
	Our main result confirms this conjecture.
	
	\begin{theorem}\label{thm:main}
		For every $k\geq 4$ and $\eps >0 $, there is $n_0$ such that every $k$-graph~$G$ on~$n\geq n_0$ vertices with $\delta_{k-3}(G) \geq (5/8 + \eps) \binom{n}{3}$ contains a tight Hamilton cycle.
	\end{theorem}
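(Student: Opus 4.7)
The plan is to deploy the absorption method in the style developed by Rödl, Ruciński, and Szemerédi and refined by Lang--Sanhueza-Matamala and Polcyn--Reiher--Rödl--Schülke for the $d=k-2$ case. The proof decomposes into four standard ingredients: (i) an \emph{absorbing lemma} producing a short tight path $P_A$ in $G$ with the property that for any sufficiently small leftover set $L\subseteq V(G)\setminus V(P_A)$, one can find a tight path on $V(P_A)\cup L$ with the same endpoints as $P_A$; (ii) a \emph{reservoir lemma} providing a small set $R\subseteq V(G)\setminus V(P_A)$ through which $(k-1)$-tuples can be flexibly joined; (iii) a \emph{covering lemma} partitioning nearly all of $V(G)\setminus(V(P_A)\cup R)$ into a bounded number of tight paths; and (iv) a \emph{connecting lemma} splicing these paths together via $R$ to close the Hamilton cycle through~$P_A$. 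Given these four lemmas, the endgame is mechanical.

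The new content lies in proving the connection, covering, and absorbing primitives under the $(k-3)$-degree condition. The governing observation is that the link of every $(k-3)$-set $S$ is a $3$-graph $L_S$ on $n-(k-3)$ vertices with edge density at least $5/8+\eps$. Since $5/8>5/9=\th_1^{(3)}$, the link $L_S$ is (in a quantitative sense) well above the Dirac threshold for tight Hamiltonicity in $3$-graphs, and one can exploit this slack to build short tight connecting paths: once one fixes any $k-3$ vertices of a connector, the remaining three vertices must be chosen inside such a dense link. I would first prove a robust \emph{reachability} statement for $(k-1)$-tuples, i.e.\ that a positive fraction of pairs of $(k-1)$-tuples admit many short tight connecting paths, and then bootstrap it to a connecting lemma valid for \emph{all} such pairs. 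The absorbing lemma follows the by-now-standard template: show that every vertex~$v$ has $\Omega(n^{2k-1})$ \emph{absorbers}, i.e.\ short tight paths that can be rerouted to include~$v$, and then combine these absorbers into a single path via the Rödl--Ruciński--Szemerédi random selection scheme together with the connecting lemma.

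The main obstacle is that the threshold $5/8$ is tight: the Han--Zhao construction shows that only the $+\eps$ slack separates~$G$ from a non-Hamiltonian extremal configuration, so each absorber and connector must be built with care. Concretely, I expect the hardest step to be establishing that every vertex has many absorbers. A direct extremal-free argument may fail near the Han--Zhao barrier, so one typically proves a \emph{stability/robustness} dichotomy: either $G$ is $\eps$-far from the extremal configuration, in which case absorbers for each vertex can be counted via standard supersaturation inside the dense links of $(k-3)$-sets, or $G$ is close to the extremal configuration, in which case the approximate structure supplies absorbers of a more rigid form (e.g.\ using both sides of the underlying partition). For the covering step I would use either the weak hypergraph regularity method together with the fractional relaxation of a tight path cover, or an iterative greedy extension in the spirit of Lang--Sanhueza-Matamala, again leveraging the dense $3$-graph links to continue building paths while maintaining the reservoir and absorber untouched.
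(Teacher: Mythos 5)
Your plan is the ``direct'' absorption route: build absorbers, a reservoir, an almost-spanning path cover, and connectors under the $(k{-}3)$-degree hypothesis, with a stability dichotomy near the Han--Zhao extremal configuration. The paper takes a genuinely different path. Rather than re-deriving the absorption machinery, it invokes a black-box reduction of Lang and Sanhueza-Matamala (cited in the paper as Theorem~11.5 of~\cite{LS22}): to conclude $\th_{k-3}^{(k)}=5/8$ for all $k\geq 4$, it suffices to prove a purely extremal statement about $3$-graphs, namely that every $3$-graph of edge density above $5/8+\eps$ contains a tightly connected subgraph $C$ of density above $1/2$ which moreover hosts a matching of size above $n/4$. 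This single reduction already packages all of the absorbing, reservoir, covering, and connecting lemmata you propose to build, as well as the lifting from the $3$-graph link to the $k$-graph. The paper then spends all of its effort on the two resulting extremal lemmata: a Connection Lemma, derived from a Kruskal--Katona-type result of Frankl--Kato--Katona--Tokushige and Huang--Linial--Naves--Peled--Sudakov on monochromatic triangles in two-coloured graphs, and a Matching Lemma, proved by a shifting argument in the style of \L uczak and Mieczkowska's proof of Erd\H{o}s' Matching Conjecture for $3$-graphs, sharpened to account for the edges of the $3$-graph that lie outside the chosen tight component but still obstruct matchings by occupying space.

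Beyond the difference in route, your plan is underdeveloped exactly where the work is hardest and where a direct argument is least likely to go through cleanly. You correctly identify that absorbers and the stability analysis near the Han--Zhao construction are the crux, but you state them as goals rather than as arguments; there is no indication of what an absorber should look like, nor what ``close to extremal'' means quantitatively, nor how the two-tight-component structure of the Han--Zhao example is exploited or excluded. Executing your plan in full would amount to reproving Theorem~11.5 of~\cite{LS22} from scratch before getting to the new content. The paper's route avoids this entirely: once the reduction is invoked, the remaining steps are finite, elementary shifting and counting arguments about $3$-graphs, with the Kruskal--Katona-type theorem doing the structural work that your stability dichotomy would otherwise have to supply. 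If you were unaware of the Lang--Sanhueza-Matamala reduction, the comparison shows why it is worth knowing: it converts a Hamiltonicity problem in $k$-graphs into an Erd\H{o}s--Gallai-type extremal problem in $3$-graphs.
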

	
	In the following section, we give an outline of the argument and reduce
	\cref{thm:main} to two lemmata, whose proofs are given in
	\cref{sec:connection,sec:matching}. We conclude with a
	discussion and a few open problems in \cref{sec:conclusion}.

	\begin{figure}
		\tikzset{every picture/.style={line width=0.75pt}}
		\begin{tikzpicture}[x=0.75pt,y=0.75pt,yscale=-1,xscale=1]
			\draw   (30,74) .. controls (30,66.27) and (36.27,60) .. (44,60) -- (86,60) .. controls (93.73,60) and (100,66.27) .. (100,74) -- (100,175.5) .. controls (100,183.23) and (93.73,189.5) .. (86,189.5) -- (44,189.5) .. controls (36.27,189.5) and (30,183.23) .. (30,175.5) -- cycle ;
			\draw   (110.25,74.25) .. controls (110.25,66.52) and (116.52,60.25) .. (124.25,60.25) -- (166.25,60.25) .. controls (173.98,60.25) and (180.25,66.52) .. (180.25,74.25) -- (180.25,175.75) .. controls (180.25,183.48) and (173.98,189.75) .. (166.25,189.75) -- (124.25,189.75) .. controls (116.52,189.75) and (110.25,183.48) .. (110.25,175.75) -- cycle ;
			\draw  [color={rgb, 255:red, 208; green, 2; blue, 27 }  ,draw opacity=1 ][fill={rgb, 255:red, 208; green, 2; blue, 27 }  ,fill opacity=0.15 ] (70,160) .. controls (70,148.95) and (85.67,140) .. (105,140) .. controls (124.33,140) and (140,148.95) .. (140,160) .. controls (140,171.05) and (124.33,180) .. (105,180) .. controls (85.67,180) and (70,171.05) .. (70,160) -- cycle ;
			\draw  [color={rgb, 255:red, 208; green, 2; blue, 27 }  ,draw opacity=1 ][fill={rgb, 255:red, 208; green, 2; blue, 27 }  ,fill opacity=0.15 ] (60,139.86) .. controls (48.96,139.78) and (40.11,124.05) .. (40.25,104.72) .. controls (40.38,85.39) and (49.44,69.78) .. (60.49,69.86) .. controls (71.53,69.94) and (80.38,85.67) .. (80.24,105) .. controls (80.11,124.33) and (71.05,139.94) .. (60,139.86) -- cycle ;
			\draw  [color={rgb, 255:red, 74; green, 144; blue, 226 }  ,draw opacity=1 ][fill={rgb, 255:red, 74; green, 144; blue, 226 }  ,fill opacity=0.25 ][dash pattern={on 4.5pt off 4.5pt}] (150.77,139.78) .. controls (139.72,139.9) and (130.6,124.33) .. (130.38,105) .. controls (130.17,85.67) and (138.95,69.9) .. (150,69.78) .. controls (161.04,69.66) and (170.17,85.23) .. (170.38,104.56) .. controls (170.59,123.89) and (161.81,139.65) .. (150.77,139.78) -- cycle ;
			\draw   (220,74.25) .. controls (220,66.52) and (226.27,60.25) .. (234,60.25) -- (276,60.25) .. controls (283.73,60.25) and (290,66.52) .. (290,74.25) -- (290,175.75) .. controls (290,183.48) and (283.73,189.75) .. (276,189.75) -- (234,189.75) .. controls (226.27,189.75) and (220,183.48) .. (220,175.75) -- cycle ;
			\draw   (300.25,74.5) .. controls (300.25,66.77) and (306.52,60.5) .. (314.25,60.5) -- (356.25,60.5) .. controls (363.98,60.5) and (370.25,66.77) .. (370.25,74.5) -- (370.25,176) .. controls (370.25,183.73) and (363.98,190) .. (356.25,190) -- (314.25,190) .. controls (306.52,190) and (300.25,183.73) .. (300.25,176) -- cycle ;
			\draw  [color={rgb, 255:red, 208; green, 2; blue, 27 }  ,draw opacity=1 ][fill={rgb, 255:red, 208; green, 2; blue, 27 }  ,fill opacity=0.15 ] (260,170.13) .. controls (260,164.53) and (275.67,160) .. (295,160) .. controls (314.33,160) and (330,164.53) .. (330,170.13) .. controls (330,175.72) and (314.33,180.25) .. (295,180.25) .. controls (275.67,180.25) and (260,175.72) .. (260,170.13) -- cycle ;
			\draw  [color={rgb, 255:red, 208; green, 2; blue, 27 }  ,draw opacity=1 ][fill={rgb, 255:red, 208; green, 2; blue, 27 }  ,fill opacity=0.15 ] (250.07,130) .. controls (239.03,129.92) and (230.17,116.45) .. (230.28,99.92) .. controls (230.4,83.38) and (239.44,70.03) .. (250.49,70.11) .. controls (261.53,70.19) and (270.4,83.66) .. (270.28,100.2) .. controls (270.16,116.73) and (261.12,130.08) .. (250.07,130) -- cycle ;
			\draw  [color={rgb, 255:red, 74; green, 144; blue, 226 }  ,draw opacity=1 ][fill={rgb, 255:red, 74; green, 144; blue, 226 }  ,fill opacity=0.15 ][dash pattern={on 4.5pt off 4.5pt}] (340.33,130.22) .. controls (329.47,130.34) and (320.51,117) .. (320.33,100.44) .. controls (320.15,83.87) and (328.81,70.34) .. (339.67,70.22) .. controls (350.53,70.1) and (359.49,83.44) .. (359.67,100) .. controls (359.85,116.57) and (351.19,130.1) .. (340.33,130.22) -- cycle ;
			\draw  [color={rgb, 255:red, 74; green, 144; blue, 226 }  ,draw opacity=1 ][fill={rgb, 255:red, 74; green, 144; blue, 226 }  ,fill opacity=0.15 ][dash pattern={on 4.5pt off 4.5pt}] (260,140.13) .. controls (260,134.53) and (275.67,130) .. (295,130) .. controls (314.33,130) and (330,134.53) .. (330,140.13) .. controls (330,145.72) and (314.33,150.25) .. (295,150.25) .. controls (275.67,150.25) and (260,145.72) .. (260,140.13) -- cycle ;
			
			\draw (60.24,104.86) node  [font=\normalsize,color={rgb, 255:red, 0; green, 0; blue, 0 }  ,opacity=1 ] [align=left] {3};
			\draw (83,153) node [anchor=north west][inner sep=0.75pt]  [color={rgb, 255:red, 0; green, 0; blue, 0 }  ,opacity=1 ] [align=left] {2};
			\draw (117,153) node [anchor=north west][inner sep=0.75pt]  [color={rgb, 255:red, 0; green, 0; blue, 0 }  ,opacity=1 ] [align=left] {1};
			\draw (150.38,105.78) node  [font=\normalsize,color={rgb, 255:red, 0; green, 0; blue, 0 }  ,opacity=1 ] [align=left] {3};
			\draw (250.5,101) node  [font=\normalsize,color={rgb, 255:red, 0; green, 0; blue, 0 }  ,opacity=1 ] [align=left] {4};
			\draw (275,163) node [anchor=north west][inner sep=0.75pt]  [color={rgb, 255:red, 0; green, 0; blue, 0 }  ,opacity=1 ] [align=left] {3};
			\draw (305,163) node [anchor=north west][inner sep=0.75pt]  [color={rgb, 255:red, 0; green, 0; blue, 0 }  ,opacity=1 ] [align=left] {1};
			\draw (340.38,101.03) node  [font=\normalsize,color={rgb, 255:red, 0; green, 0; blue, 0 }  ,opacity=1 ] [align=left] {4};
			\draw (275,133) node [anchor=north west][inner sep=0.75pt]  [color={rgb, 255:red, 0; green, 0; blue, 0 }  ,opacity=1 ] [align=left] {1};
			\draw (305,133) node [anchor=north west][inner sep=0.75pt]  [color={rgb, 255:red, 0; green, 0; blue, 0 }  ,opacity=1 ] [align=left] {3};

		\end{tikzpicture}
		
		\caption{The picture shows a $3$-graph on the left and a $4$-graph on the right.
			Both hypergraphs have their vertex sets partitioned into two parts of equal size.
			The drawn edges indicate that the graphs contain all edges of this type.
			The colours highlight the respective tight components.
			Since any tight cycle is either red or blue, neither of the hypergraphs admits a tight Hamilton cycle.
			This gives a lower bound for the corresponding minimum degree thresholds. Indeed, the $3$-graph has a
				minimum $2$-degree close to $\tfrac{1}{2}n$, while the $4$-graph has minimum $1$-degree close to $\tfrac{5}{8}\binom{n}{3}$.
		}
		\label{fig:constructions}
	\end{figure}

	\section{From Dirac to Erdős--Gallai}
	
	Let us recall two classic problems from extremal combinatorics. The
	first one is Erdős' Matching Conjecture~\cite{Er65}, which predicts the
	size of a largest matching we are guaranteed to find in a $k$-graph
	with a given number of vertices and edges.
	A matching is a subgraph with pairwise disjoint edges.
	
	\begin{conjecture}\label{con:EMC}
		Let $G$ be a $k$-graph on $n$ vertices that does not contain a matching of more than $s$ edges.
		Then
		\begin{align*}
			e(G) \leq \max\left\{\binom{(s+1)k-1}{k},\, \binom{n}{k} - \binom{n-s}{k}\right\}.
		\end{align*}
	\end{conjecture}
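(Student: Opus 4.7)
This is the Erd\H{o}s Matching Conjecture from 1965, which remains open in general, so the outline below is necessarily aspirational; what follows is the strategy underlying the partial results of Erd\H{o}s, Frankl, Huang--Loh--Sudakov, Frankl--Kupavskii and others.

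The first step is to pass to a \emph{shifted} hypergraph. Fixing the natural order on $V(G)=[n]$, one iteratively applies $(i,j)$-compressions with $i<j$: replace an edge $e$ with $j\in e$ and $i\notin e$ by $(e\setminus\{j\})\cup\{i\}$ whenever the latter is not already an edge. This operation preserves $|V(G)|$ and $e(G)$ and cannot increase the matching number $\nu(G)$, so it suffices to prove the bound for a shifted $G$, i.e.\ one in which $\{i_1<\dots<i_k\}\in E(G)$ and $j_r\le i_r$ for all $r$ imply $\{j_1,\dots,j_k\}\in E(G)$.

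The second step is induction on $s+k$. Consider the link $L=\{f\subset[n]\setminus\{1\}:\{1\}\cup f\in E(G)\}$ of the smallest vertex in a shifted $G$. Shiftedness implies a useful matching-extension property: any matching $M$ in $L$ of size $\nu(L)$ can, together with the edge $\{1\}\cup f$ for a carefully chosen $f\in M$ and one further disjoint edge of $G-\{1\}$, be promoted to a matching of size $\nu(L)+1$ in $G$. Combined with $\nu(G)\le s$, this forces $\nu(L)\le s-1$, and the inductive hypothesis bounds $d(1)=e(L)$; a parallel bound applies to $e(G-\{1\})$ via $\nu(G-\{1\})\le s$. Summing across vertices yields a recursion whose extremal solutions interpolate between the clique $K^{(k)}_{(s+1)k-1}$ and the cover $\{e:e\cap S\ne\varnothing\}$ on a fixed $s$-set $S$; matching the conjectured maximum amounts to solving this recursion sharply.

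The main obstacle, and the reason the conjecture remains open, is the transitional regime $n\approx(s+1)k$, where the clique and cover constructions have comparable size and can moreover be glued into hybrid near-extremal configurations. The existing proofs---Frankl's resolution for $k=3$, and the Erd\H{o}s/Huang--Loh--Sudakov/Frankl--Kupavskii bounds, which all require $n\ge C k s$ for some absolute $C>1$---circumvent this regime by exploiting slack in a stability argument; handling $n$ close to the extremal value for arbitrary $k$ is precisely the core difficulty, and any honest proof plan has to confront it.
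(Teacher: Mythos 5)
You have correctly identified that the labelled statement is Erd\H{o}s' Matching Conjecture, which is open in general, and indeed the paper offers no proof of it: it is stated as \cref{con:EMC} and only invoked in its settled cases, namely $k=2$ (Erd\H{o}s--Gallai) and $k=3$ (\L uczak--Mieczkowska for large $n$, Frankl for all $n$). There is therefore no ``paper's own proof'' against which your outline can be measured, and your acknowledgement that any complete argument remains out of reach is the honest and correct response. Your sketch of the shifting (compression) machinery and the induction on $s+k$ via the link of the minimum vertex is a fair summary of the techniques behind the known partial results, and it is worth noting that the same shifting apparatus is precisely what the paper itself deploys in \cref{sec:shift} and the proof of \cref{lem:EG-5/8}, adapting the \L uczak--Mieczkowska strategy. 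One small imprecision: the logic by which you deduce $\nu(L)\le s-1$ from a ``matching-extension property'' is stated too loosely to be a workable lemma as written; the standard arguments instead fix a maximum matching in the shifted graph and count edges meeting it, rather than inducting through the link in the way you describe. But since you are explicitly offering an aspirational outline of an open problem rather than a proof, this does not constitute a gap to be charged against you.
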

	
	The conjecture was resolved
	by Erdős and Gallai~\cite{EG59} for $k=2$ and for $k=3$ by \L uczak and
	Mieczkowska~\cite{LM14} (when $n$ is large) and Frankl~\cite{Fra17} (for all $n$). The second problem
	concerns the Erdős--Gallai Theorem~\cite{EG59}, which tells us the size
	of a longest cycle in a graph of given density. This problem has a
	natural extension to tight cycles in hypergraphs, and it was first studied by
	Gy\H{o}ri, Katona, and Lemons~\cite{GKL16} and Allen, Böttcher, Cooley, and Mycroft~\cite{ABCM17}.
	
	How are these questions related to Dirac-type problems?
	The idea is to study the internal structure of the neighbourhoods.
	Alon, Frankl, Huang, Rödl, Ruciński, and Sudakov~\cite{AFH+12}
	showed that the problem of determining the minimum $d$-degree threshold
	for $k$-uniform perfect matchings can be reduced to a special case of
	Erdős' Matching Conjecture for $(k-d)$-graphs, applied to
	the $(k-d)$-uniform link hypergraphs. Analogously, it was shown by Lang
	and Sanhueza-Matamala~\cite{LS22} that the problem of determining the minimum
	$d$-degree threshold for $k$-uniform tight Hamilton cycles can be
	reduced to an Erdős--Gallai-type question for $(k-d)$-uniform link hypergraphs.
	(A simpler proof of this result can be found in the more recent work of the same authors~\cite{LS24a}.)
	
	To formalise this discussion, we require some additional terminology.
	For a $k$-graph $G$, let~$G^{\meister}$ be
	the \emph{line graph} on the vertex
	set~$E(G)$ with an edge $ef$ whenever $|e \cap f| = k-1$. A subgraph
	$H\subset G$ is said to be
	\emph{tightly connected} if it has no isolated vertices and~$E(H)$ induces a connected subgraph in $G^\meister$.
	Moreover, we refer to edge maximal tightly connected subgraphs as
	\emph{tight components}.
	
	\begin{theorem}[{Lang and Sanhueza-Matamala~\cite{LS22}*{Theorem~11.5}}]\label{thm:LS}
		Suppose for every $\eps >0$, there are $\gamma>0$ and $n_0$ such that every $3$-graph $G$ on $n\geq n_0$ vertices with $e(G) \geq (5/8 + \eps) \binom{n}{3}$ contains a subgraph $C \subset G$ such that
		\begin{enumerate}[label=\rmlabel]
			\item \label{itm:connectivity} $C$ is tightly connected,
			\item \label{itm:density} $e(C) \geq (1/2+\gamma) \binom{n}{3}$, and
			\item \label{itm:space} $C$ has a matching of size at least $(1/4+\gamma)n$.
		\end{enumerate}
		Then $\th_{k-3}^{(k)}=5/8$ for every $k\geq 4$.\qed
	\end{theorem}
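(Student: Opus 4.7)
The plan is to apply the absorption method to $G$, reducing each of its standard ingredients to a structural statement about the $3$-uniform link hypergraphs. Given a $k$-graph $G$ with $\delta_{k-3}(G) \geq (5/8+\eps)\binom{n}{3}$, for every $(k-3)$-set $S \subset V(G)$ the link $L_S = \{T : S \cup T \in E(G)\}$ is a $3$-graph of density at least $5/8+\eps$. Invoking the hypothesis for each such $L_S$ produces a tightly connected subgraph $C_S \subset L_S$ of density at least $1/2+\gamma$ containing a matching of size at least $(1/4+\gamma)n$. The three properties \ref{itm:connectivity}, \ref{itm:density}, and \ref{itm:space} will respectively drive the three classical ingredients of the absorption method.

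For the \emph{connecting lemma} — joining any two compatible $(k-1)$-tuples lying in a common tight component of $G$ by a short tight path — I would use \ref{itm:connectivity}: a short tight walk in $C_S$ lifts to a tight-path extension in $G$ through the $(k-3)$-set $S$, and iteration yields the desired connection while staying inside a single tight component. For the \emph{absorbing lemma} — producing a single path that can absorb any sublinear leftover set — the input is \ref{itm:space}: each edge of the linear-sized matching in a link $C_S$ (for $S$ containing a vertex $v$) yields a local absorbing gadget for $v$, and a standard random concatenation merges them into one absorbing path. For the \emph{covering lemma} — finding an almost-spanning tight path in the remainder — I would use \ref{itm:density}: since $1/2$ is the Dirac threshold for graphs, each $C_S$ of density above $1/2$ can be organised into an auxiliary graph on admissible $(k-1)$-tuples whose minimum degree comfortably exceeds $1/2$, enabling a Dirac-type (or fractional-matching) argument to yield the long tight path.

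The main obstacle is the covering lemma: the density threshold $1/2$ is tight, leaving no slack at the level of an individual link. The plan to handle this is to extract from \ref{itm:density} a fractional tight path cover of $V(G)$ and then convert it into an integral almost-spanning tight path using the tight connectivity \ref{itm:connectivity} of $C_S$ to patch fragments together, relying on \ref{itm:space} to ensure that at most $o(n)$ vertices remain uncovered for the absorber. A secondary difficulty is ensuring that the subgraphs $C_S$ are globally coherent across the many $(k-3)$-sets $S$, since different choices could in principle land in incompatible tight components of $G$; this I would resolve by a small random deletion argument identifying a single ``main'' tight component of $G$, inside which every $C_S$ may be interpreted. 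With these pieces in place, the reservoir-plus-absorber-plus-cover assembly produces a tight Hamilton cycle in $G$, establishing $\th_{k-3}^{(k)}=5/8$ for all $k\geq 4$.
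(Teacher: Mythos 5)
The paper itself gives no proof of this statement: it is imported verbatim from Lang and Sanhueza-Matamala \cite{LS22}*{Theorem~11.5} with a \(\square\), and the reduction it encodes occupies a large part of that roughly seventy-page paper. So there is no internal proof to compare against; the question is whether your sketch could plausibly stand in for \cite{LS22}.

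Your three-ingredient absorption outline is directionally reasonable, and you have correctly paired property~\ref{itm:connectivity} with connecting, \ref{itm:space} with absorbing, and \ref{itm:density} with covering. But both difficulties you flag are genuine and neither of your proposed fixes closes the gap. First, the covering step: relative density slightly above \(1/2\) in each link \(C_S\) gives you \emph{no slack}, and the workaround of producing a fractional tight path cover and then patching the fragments via tight connectivity is exactly where the hard work lives. In \cite{LS22} this is handled through the hypergraph regularity lemma together with a ``Hamilton framework'' formalism (fractional tilings on reduced hypergraphs, boosting, and a cover lemma proved at that level of abstraction), not by a direct Dirac-type argument inside the links. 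Second, the coherence issue: the subgraphs \(C_S\) for different \((k-3)\)-sets \(S\) need not lift to a common tight component of \(G\), and a ``small random deletion identifying a main tight component'' is not an argument — one has to show that a positive-density tight component of \(G\) simultaneously restricts well to most links, which again is part of the framework machinery. There is also a third, unaddressed gap: passing from walks and matchings in the \(3\)-uniform link \(C_S\) to tight walks and absorbers among \((k-1)\)-tuples in the \(k\)-graph \(G\) is nontrivial, since as \(S\) slides along a tight path the relevant link changes, and one must control this drift. In short, your sketch correctly identifies the shape of the argument and even its pressure points, but each of those pressure points hides a theorem; as written, this would not reconstruct \cite{LS22}*{Theorem~11.5}, and treating it as a black-box citation (as the paper does) is the appropriate move.
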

	
	Against this backdrop, our argument proceeds as follows. Let $G$ be an
	$n$-vertex $3$-graph with $e(G) \geq (5/8 + \eps) \binom{n}{3}$. In a first step, we establish the existence of
	a tight component $C \subset G$ that
	satisfies conditions~\ref{itm:connectivity} and~\ref{itm:density} of \cref{thm:LS}.
	
	\begin{lemma}[Connection]\label{lem:connection}
		For all $\eps>0$, there exists $n_0$ such that every $3$-graph~$G$ on $n\geq n_0$ vertices with $e(G) \geq (5/8 + \eps) \binom{n}{3}$ contains a tight component $C \subset G$ with $e(C) \geq (1/2+\eps) \binom{n}{3}$.
	\end{lemma}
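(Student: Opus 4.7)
The plan is a contradiction argument: assume every tight component of $G$ has fewer than $(1/2+\eps)\binom{n}{3}$ edges, and deduce the matching upper bound $e(G) \leq (5/8+O(\eps))\binom{n}{3}$, which contradicts the hypothesis after a mild rescaling of $\eps$.

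The starting point is a pair-colouring observation. Since any two edges of $G$ that share a pair $\{u,v\}$ are tightly adjacent, all edges of $G$ through $\{u,v\}$ belong to the same tight component. This canonically assigns every pair of the $2$-shadow $\partial G$ to a unique tight component $C_i$, producing a partition $\partial G = P_1 \dcup P_2 \dcup \cdots$, where $P_i$ is the pair-set of $C_i$. Crucially, for every edge $\{x,y,z\}\in G$ the three pairs $xy,xz,yz$ all lie in the same $P_i$; viewing each $P_i$ as a graph on $V$, the edges of $G$ are precisely the monochromatic triangles of the induced pair-colouring, so
\[
 e(G) \;\leq\; \sum_i t(P_i),
\]
where $t(P_i)$ counts the triangles of $P_i$, and the hypothesis becomes $\max_i t(P_i) \leq (1/2+\eps)\binom{n}{3}$.

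The task therefore reduces to the extremal inequality: for any edge-partition $\{P_i\}$ of $K_n$ with $\max_i t(P_i) \leq (1/2+\eps)\binom{n}{3}$, show $\sum_i t(P_i) \leq (5/8+O(\eps))\binom{n}{3}$. The matching tight example is $|A|=|B|=n/2$ with $P_1 = \binom{A}{2}$ and $P_2 = \binom{V}{2}\setminus P_1$, giving $t(P_1)\sim (1/8)\binom{n}{3}$ and $t(P_2)\sim (1/2)\binom{n}{3}$. To prove the inequality I would single out the class $P^*$ of maximum triangle count, use that the union of edge-disjoint subgraphs has at least as many triangles as their individual sum to get $\sum_i t(P_i) \leq t(P^*) + t(\overline{P^*})$ with $\overline{P^*}=\binom{V}{2}\setminus P^*$, and then apply the Goodman identity $t(P^*)+t(\overline{P^*}) = \binom{n}{3} - B(P^*)$, where $B(P^*) = \tfrac12\sum_v d_{P^*}(v)\bigl(n-1-d_{P^*}(v)\bigr)$ counts triples having a pair in each of $P^*$ and $\overline{P^*}$. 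The heart is then a Kruskal--Katona stability step: when $t(P^*)$ is close to $(1/2)\binom{n}{3}$ the degree sequence of $P^*$ must concentrate on the extremal pattern (a clique or clique-complement on roughly $n/2$ vertices), yielding $B(P^*) \geq (3/8 - O(\eps))\binom{n}{3}$ and hence the bound.

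The main obstacle is that the two-colour bound $\sum_i t(P_i) \leq t(P^*)+t(\overline{P^*})$ is not by itself sharp enough in every regime: if $P^*$ happens to be a small clique then $t(\overline{P^*})$ can approach $\binom{n}{3}$, vastly exceeding $5/8 - t(P^*)$. In this ``small-$P^*$'' case one must additionally exploit that $\overline{P^*}$ is itself partitioned into classes of triangle count at most $(1/2+\eps)\binom{n}{3}$, providing a substantial pool of rainbow triples inside $\overline{P^*}$ that do not contribute to $\sum_i t(P_i)$. Quantifying the Kruskal--Katona stability in the ``large-$P^*$'' regime with sharp error terms, and handling the rainbow-loss bookkeeping in the ``small-$P^*$'' regime, together constitute the technical heart of the argument.
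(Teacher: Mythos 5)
Your high-level reduction matches the paper's: argue by contradiction, use that distinct tight components share at most one vertex so the pair-shadows $P_i=\partial C_i$ are pairwise edge-disjoint in $K_n$, and bound the total number of monochromatic triangles. The gap is in the extremal step, on two counts. First, the paper does not compare the largest class $P^*$ against everything else. It first reduces WLOG to the regime $e(G)\le(\frac58+2\eps)\binom n3$, sorts the tight components by $e(C_1)\ge\dots\ge e(C_\ell)$, and greedily cuts the sequence at the first $i$ with $e(C_1)+\dots+e(C_i)\ge(\frac12+\eps)\binom n3$. Setting $R=C_1\cup\dots\cup C_{i-1}$ and $B=C_i\cup\dots\cup C_\ell$, one checks $\frac18\binom n3\le|R|,|B|<(\frac12+\eps)\binom n3$ (the upper bound on $e(G)$ is exactly what forces $e(C_i)<\frac38\binom n3$ and hence keeps $B$ small too), so that \emph{both} colour classes carry at least $\binom n3/8$ monochromatic triangles. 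Your $P^*$-vs.-rest split destroys this balance — you flag this correctly — but the proposed ``rainbow-loss bookkeeping'' is only a gesture, and a naive greedy on the triangle counts $t(P_i)$ alone does not work without the WLOG upper bound on $e(G)$. The greedy grouping on the component edge-counts, together with that reduction, resolves it cleanly.

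Second, and more fundamentally, the ``Kruskal--Katona stability step'' you describe is not a proof but a restatement. By Goodman's identity, the target bound $t(P^*)+t(\overline{P^*})\le(\frac58+O(\eps))\binom n3$ is equivalent to $B(P^*)\ge(\frac38-O(\eps))\binom n3$, so your claim that the degree sequence of $P^*$ must concentrate on the clique-complement pattern is precisely (a stability form of) the two-colour theorem of Frankl--Kato--Katona--Tokushige and, independently, Huang--Linial--Naves--Peled--Sudakov that the paper quotes as \cref{lem:triangles} and uses as a black box. That result is genuinely non-trivial, does not follow from the standard Kruskal--Katona shadow bound, and the concentration claim as you state it is in fact false in that generality (a quasirandom graph of density about $0.79$, or a clique on about $0.79n$ vertices, also has $t\approx\frac12\binom n3$ without resembling a clique-complement on $n/2$ vertices). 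As written, this step has no proof; you should instead group into two balanced colour classes and cite the FKKT/HLNPS theorem.
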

	
	We then show that any such tight component $C$ provided by \cref{lem:connection} contains a large matching satisfying condition \ref{itm:space} of \cref{thm:LS}.
	
	\begin{lemma}[Matching]\label{lem:matching}
		For all $\eps >0$, there exist $\gamma>0$ and $n_0$ such that every $3$-graph~$G$ on $n\geq n_0$ vertices with $e(G) \geq (5/8 + \eps) \binom{n}{3}$ and a tight component $C \subset G$ with $e(C) \geq (1/2+\eps) \binom{n}{3}$ contains a matching $M \subset C$ of size at least $(1/4+\gamma)n$.
	\end{lemma}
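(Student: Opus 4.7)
I argue by contradiction: assume that the maximum matching $M$ in $C$ satisfies $\nu(C) = s \leq (1/4+\gamma)n$ for a sufficiently small $\gamma = \gamma(\eps) > 0$ to be fixed, and derive a contradiction.

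The starting point is the $3$-uniform Erdős Matching Conjecture, proved by Łuczak--Mieczkowska and by Frankl. Applied to $G$, the density hypothesis $e(G) \geq (5/8+\eps)\binom{n}{3}$ (which exceeds the clique bound $(3/4)^3\binom{n}{3}$) yields
\[
\nu(G) \;\geq\; \bigl(1-(3/8-\eps)^{1/3}\bigr)n \;\geq\; (1/4+c)n
\]
for some $c=c(\eps)>0$; fix a witness $M^\star \subset E(G)$ and split $M^\star = M_C \sqcup M_D$ by membership in $E(C)$. Since $\nu(G) \leq \nu(C)+\nu(G\setminus C)$, it suffices to prove $\nu(G\setminus C) < (c-\gamma)n$, for then $\nu(C) \geq (1/4+\gamma)n$, a contradiction.

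The tight-component rigidity enters as follows: for any $f \in E(G)\setminus E(C)$, no pair $\{u,v\}\subset f$ can have an edge $\{u,v,w\}\in E(C)$, since that would make $f$ tightly adjacent to $C$. Hence all three pairs of every such $f$ lie in the \emph{bad-pair set}
\[
Q \;=\; \bigl\{\{u,v\}\in\tbinom{V}{2}:\text{link}_C(u,v)=\varnothing\bigr\}.
\]
Summing $|\text{link}_C(u,v)|$ over pairs and using the trivial bound $|\text{link}_C(u,v)|\leq n-2$ together with $e(C) \geq (1/2+\eps)\binom{n}{3}$ gives $|Q| \leq (1/2-\eps)\binom{n}{2}$. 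Moreover, writing $W = V(M)$ and $X = V\setminus W$, maximality of $M$ forces $\text{link}_C(u,v) \subset W$ for every $\{u,v\}\subset X$, so that $|\text{link}_C(u,v)| \leq 3s$ on such pairs; this strengthens the link count substantially and allows us to pinpoint where $Q$ is concentrated.

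The heart of the argument then combines these ingredients via a stability version of the Erdős Matching Conjecture (in the style of Frankl–Kupavskii): when $e(C)$ is close to the star bound $\binom{n}{3}-\binom{n-s}{3}$, $C$ is approximately a star on a set $S$ with $|S|\leq s$. Tight-component rigidity then absorbs essentially all $G$-edges in $\binom{V\setminus S}{3}$ into $C$ (each such edge is tightly adjacent to many star edges of $C$), so the edges of $G\setminus C$ lie close to the set of $G$-triples meeting $S$. Since $|S| \leq (1/4+\gamma)n$ and edges of $G\setminus C$ each meet $S$, their matching number satisfies $\nu(G\setminus C) \leq |S| - $ (matching in $M_C$ meeting $S$), which, together with the double counting $\nu(G)\leq \nu(C)+\nu(G\setminus C)$, yields the needed bound $\nu(G\setminus C) < (c-\gamma)n$ once $\gamma$ is chosen small enough.

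\textbf{Main obstacle.} The quantitative closure of the inequalities is delicate: the naive estimates $|M_D|\leq |Q|/3$ and $\nu(G\setminus C)\leq n/3$ are both too weak. The real work is to make the stability step precise enough — extracting a near-star structure for $C$ in the entire regime $e(C) \in [(1/2+\eps)\binom{n}{3},\, (37/64+O(\gamma))\binom{n}{3}]$ and not only near the star-bound extremum — and then converting this structural information into an actual upper bound on the matching in $G\setminus C$. The parameters $\gamma$, $c$ and the stability error must be carefully balanced in terms of $\eps$ so that the chain of inequalities closes.
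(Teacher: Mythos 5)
There is a genuine gap at the very first reduction: you propose to derive a contradiction from the chain $\nu(G)\leq \nu(C)+\nu(G\setminus C)$, and to do so you claim it suffices to prove $\nu(G\setminus C)<(c-\gamma)n$ where $c\approx 1-(3/8)^{1/3}-1/4\approx 0.029$. But this intermediate claim is false in exactly the configuration one must beat. Take $V=X\dcup Y$ with $|X|=(1/2+\Theta(\eps))n$, $|Y|=(1/2-\Theta(\eps))n$, and let $G$ consist of all triples except those with one vertex in $X$ and two in $Y$. Then $e(G)\approx(5/8+\eps)\binom{n}{3}$, the triples with at least two vertices in $X$ form a tight component $C$ with $e(C)\approx (1/2+\Theta(\eps))\binom{n}{3}$, and $G\setminus C$ is the complete $3$-graph on $Y$, so $\nu(G\setminus C)=\lfloor|Y|/3\rfloor\approx n/6\gg (c-\gamma)n$. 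The inequality $\nu(G)\leq\nu(C)+\nu(G\setminus C)$ is therefore far from tight here (indeed $\nu(G)=n/3$ while $\nu(C)+\nu(G\setminus C)\approx 5n/12$), because the matchings in $C$ and $G\setminus C$ compete for the $Y$-vertices; your accounting discards precisely this vertex-budget interaction, which is what makes $\nu(C)$ small.

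A second, related problem is the stability step: you argue that $C$ should be approximately a star once $e(C)$ is near the star bound $\binom{n}{3}-\binom{n-s}{3}$. But at $s\approx n/4$ that bound is $\approx(37/64)\binom{n}{3}$, whereas the hypothesis only gives $e(C)\geq(1/2+\eps)\binom{n}{3}$, well below it, and the construction above shows the relevant near-extremal $C$ is not star-like at all: it is the family of triples with at least two vertices in a half-sized set $X$, which has $\nu(C)\leq|X|/2$ purely by a counting constraint that no near-star stability statement would reveal. The paper sidesteps both issues by not reasoning with $\nu(G)$ at all: it deletes a small matching $M'$ from $C$, observes that $R=C-V(M')$ and the remaining edges $B$ of $G-V(M')$ form a distinguishable pair (since $C$ is a tight component), and reduces to the bespoke extremal problem $\mu(n,n/4,\binom{n}{3}/2)\leq(5/8+\gamma)\binom{n}{3}$, which is then resolved by shifting and a Łuczak--Mieczkowska-style case analysis. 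It is this joint extremal problem for the pair $(R,B)$, rather than a bound on $\nu(G\setminus C)$, that captures the way the edges outside $C$ obstruct the matching.
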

	The proof of \cref{lem:connection} can be found in the next section. It is an easy consequence of a
	Kruskal--Katona-type result obtained, independently, by Frankl, Kato, Katona, and Tokushige~\cite{FKKT13}
	as well as by Huang, Linial, Naves, Peled, and Sudakov~\cite{HLN+16}.
	For the proof of \cref{lem:matching}, presented in \cref{sec:matching}, we adapt a strategy of
	Łuczak and Mieczkowska~\cite{LM14} from their proof of  Erdős' Matching
	Conjecture for $3$-graphs.

	\section{The connection lemma}\label{sec:connection}
	
	We start by stating the aforementioned Kruskal--Katona-type result~\cites{FKKT13,HLN+16}.
	We denote the complete graph on $n$ vertices by $K_n$.
	
	\begin{theorem}\label{lem:triangles}
		For every $\eps>0$, there exists $n_0$ such that for every $n\geq n_0$ the following holds.
		Suppose that the edges of a subgraph of $K_n$ are coloured red and blue
		such that there are at least $\binom{n}{3}/8$ monochromatic triangles in each colour.
		Then there are fewer than $(5/8 + \eps) \binom{n}{3}$  monochromatic triangles all together. \qed
	\end{theorem}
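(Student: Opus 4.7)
My plan is to view this as a Kruskal--Katona-type extremal bound on the joint triangle density. Let $G$ be the $2$-edge-coloured subgraph of $K_n$ and write $H_R, H_B$ for the $3$-uniform hypergraphs of red and blue monochromatic triangles, with densities $t_R := |H_R|/\binom{n}{3}$ and $t_B := |H_B|/\binom{n}{3}$. The structural key is: every pair in the $2$-shadow $\partial H_R$ lies in a red triangle and therefore carries a red edge, and symmetrically for $\partial H_B$. Hence $\partial H_R \cap \partial H_B = \varnothing$, giving $s_R + s_B \le 1$ for the shadow densities $s_R := |\partial H_R|/\binom{n}{2}$, $s_B := |\partial H_B|/\binom{n}{2}$, while Kruskal--Katona supplies $s_R \ge t_R^{2/3}$ and $s_B \ge t_B^{2/3}$ asymptotically.

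Optimising $t_R + t_B$ under $t_R^{2/3} + t_B^{2/3} \le 1$ together with $t_R, t_B \ge 1/8$ yields only $t_R + t_B \le (3\sqrt 3 + 1)/8 \approx 0.775$, which falls short of $5/8$. The main obstacle is therefore to sharpen this first-order step. The gap is not an artefact of the optimisation: the extremal configuration --- an equi-bipartition $V = A \dcup B$ with red edges $\binom{A}{2} \cup (A \times B)$ and blue edges $\binom{B}{2}$, attaining $t_R = 1/2$ and $t_B = 1/8$ --- already saturates $s_R + s_B = 1$ at $(s_R, s_B) = (3/4, 1/4)$, yet its red hypergraph is very far from colex-extremal (it is a clique on $A$ blown up by pendant cross-edges to $B$), so Kruskal--Katona is strict by a nontrivial margin.

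To close the gap I would exploit the joint structure of $H_R$ and $H_B$ via a second-order counting. Writing $3T_R = \sum_{uv \in \partial H_R} \operatorname{codeg}_R(u,v)$, where $\operatorname{codeg}_R$ is the red codegree in the red graph $G_R$, I would apply Cauchy--Schwarz and use that, by the disjoint-shadow constraint, $G_R$ must avoid most pairs inside the ``blue-heavy'' vertex set forced by $s_B \ge t_B^{2/3} \ge 1/4$. Equivalently, the required sharp inequality is essentially the standalone Kruskal--Katona-type bound from Frankl--Kato--Katona--Tokushige~\cite{FKKT13} and falls out of the $3$-local profile framework of Huang--Linial--Naves--Peled--Sudakov~\cite{HLN+16}; either pins down the whole feasible region of $(t_R, t_B)$ and identifies $(1/2, 1/8)$ as the extremum under $\min(t_R, t_B) \ge 1/8$. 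I expect implementing this joint step self-contained --- rather than invoking an off-the-shelf profile theorem --- to be the main technical work.
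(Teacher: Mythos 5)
The paper does not prove this theorem: it is stated without proof as a quoted result of Frankl--Kato--Katona--Tokushige~\cite{FKKT13} and Huang--Linial--Naves--Peled--Sudakov~\cite{HLN+16}, and your proposal ends in the same citation. Your preliminary analysis is correct and worth recording --- $\partial H_R \cap \partial H_B = \varnothing$, the naive disjoint-shadow-plus-Kruskal--Katona optimisation yields only $(3\sqrt{3}+1)/8 \approx 0.775$, and the equi-bipartite extremal construction saturates the shadow constraint while leaving red Kruskal--Katona strict --- but you rightly conclude that the sharp inequality is precisely the theorem of~\cite{FKKT13} and~\cite{HLN+16}, which the paper likewise invokes as a black box.
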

	
	Below we derive \cref{lem:connection} from \cref{lem:triangles}.
	
	\begin{proof}[Proof of \cref{lem:connection}]
		For given $\eps > 0$, we set $n_0$ according to \cref{lem:triangles}.
		Let $G$ be a $3$-graph on $n\geq n_0$ vertices with $e(G) \geq  (5/8 + \eps) \binom{n}{3}$.
		Without loss of generality, we may assume~$\eps<1/16$ and $e(G) \leq (5/8+2\eps) \binom{n}{3}$.
		
		Let $C_1,\dots,C_\l$ be the tight components of $G$ with $e(C_1)\geq \dots \geq e(C_\l)$.
		Aiming for a contradiction,
		we impose that $e(C_1)<(1/2+\eps)\binom{n}{3}$.
		We shall group the tight components of~$G$ into a partition $R\dcup B=E(G)$ such that
		\begin{equation}\label{eq:triangles1}
			\max\big\{|B|,|R|\big\}
			<
			\Big(\frac{1}{2} + \eps\Big) \binom{n}{3}
		\end{equation}
		and no edge of $R$ intersects with any edge of $B$ in more than one
		vertex. Indeed, considering the smallest integer $i$ such that
		\[
		e(C_1)+\dots+e(C_i)\geq \Big(\frac{1}{2} + \eps\Big) \binom{n}{3}
		\]
		and recalling the upper bound on $e(G)$, tells us
		\[
		e(C_{i+1})+\dots+e(C_\l) \leq \Big(\frac{1}{8} + \eps\Big) \binom{n}{3}\,.
		\]
		Note that $e(C_i)< (3/8)\binom{n}{3}$, since otherwise the monotonicity of $e(C_j)$
		combined with the upper bound $e(G) \leq (5/8+2\eps) \binom{n}{3}$ implies $i=1$, which contradicts the choice of $i$.
		Consequently,  the sets $R=E(C_1)\cup\dots\cup E(C_{i-1})$ and
		$B=E(C_i)\cup\dots\cup E(C_{\l})$ form the desired partition $R\dcup B=E(G)$.
		
		Now observe that inequality~\eqref{eq:triangles1} and the given lower bound $e(G) \geq  (5/8 + \eps) \binom{n}{3}$ yields
		\[
		\frac{1}{8}\binom{n}{3}
		\leq
		\min\big\{|B|,|R|\big\}\,.
		\]
		From this, we derive an edge-colouring of
		$K_n$ by giving an edge colour red if it is contained in a triple
		of $R$ and colour blue if it is contained in a triple of $B$.
		(The remaining edges are coloured arbitrarily.)
		By
		\cref{lem:triangles} it follows that there are fewer than $(5/8 + \eps)
		\binom{n}{3}$ monochromatic triangles. But this contradicts the assumption
		that $e(G) \geq (5/8 + \eps) \binom{n}{3}$.
	\end{proof}

	\section{The matching lemma}\label{sec:matching}
	
	In this section, we establish \cref{lem:matching}. We proceed by studying the extremal function for matchings
	hosted by a largest tight component in a $3$-graph $G$.
	If $G$ has edge density above~$5/8$, then we are guaranteed by \cref{lem:connection} a tight component $C$ of density at least~$1/2$.
	Applying Erdős' Matching Conjecture (\cref{con:EMC}) to $C$ gives a matching of size at least $(1-2^{-1/3})n \approx 0.206 n$, which
	does not suffice for the aspirations set in \cref{lem:matching}.
	However, this na\"\i ve approach turns out to be suboptimal, because there might be edges in $G$ that lie outside of $C$,
	which contribute indirectly to the size of a largest matching by obstructing the space.
	The extremal constructions arising from these restrictions are no longer captured by Erdős' Matching Conjecture, and
	we thus require a more nuanced analysis to deduce \cref{lem:matching}.
	
	In \cref{sec:extc} we formulate the corresponding extremal problem, and we reduce \cref{lem:matching}
	to it (see \cref{lem:EG-5/8}). The rest of \cref{sec:matching} is devoted to the proof of \cref{lem:EG-5/8}.
	The proof is based on the approach of \L uczak and Mieczkowska~\cite{LM14} to show the Erdős' Matching Conjecture
	for $3$-graphs. In particular, we also employ the \emph{shifting technique},
	which is described in \cref{sec:shift}.

	\subsection{Extremal function for matchings in tight components}
	
	\label{sec:extc}
	We write $m(R)$ for the size of a largest matching in a $3$-graph $R$.
	We call $3$-graphs $R$ and $B$ \emph{distinguishable} if the edges of
	$R$ and $B$ only intersect in single vertices.
	So in particular, distinct tight components are distinguishable.
	\begin{definition}
		We define $\cG(n,s,t)$ as
		the family of all pairs $(R,B)$ such that $R$ and~$B$ are distinguishable $3$-graphs on the vertex set $\{1,\dots,n\}$ with $m(R)\leq s$ and $e(R) > t$.
	\end{definition}
	Moreover, we define the extremal number
	\[
	\mu(n,s,t)
	=
	\max \big\{e(R \cup B) \colon (R,B) \in \cG(n,s,t)\big\}\,,
	\]
	and we denote the family of extremal pairs by
	\[
	\cM(n,s,t)
	=
	\big\{(R,B)\in\cG(n,s,t)\colon e(R \cup B) = \mu(n,s,t)\big\}\,.
	\]
	In view of \cref{lem:matching}, we are interested
	in~$\mu(n,s,t)$ for $s\approx n/4$ and $t\approx\binom{n}{3}/2$, which is rendered by the main lemma of this section.
	\begin{lemma}\label{lem:EG-5/8}
		For each $\gamma>0$, we have
		$\mu\big(n,n/4,\tbinom{n}{3}/2\big) \leq ( {5}/{8}+\gamma )  \binom{n}{3}$
		for sufficiently large~$n$.
	\end{lemma}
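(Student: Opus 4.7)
The plan is to work with an extremal pair $(R,B)\in \cM(n,n/4,\binom{n}{3}/2)$ and adapt the shifting approach of \L uczak and Mieczkowska to pairs of distinguishable 3-graphs. I introduce a \emph{simultaneous shift}: for each $i<j$, every $R$-edge $e$ with $j\in e,\,i\notin e$ is replaced by $(e\setminus\{j\})\cup\{i\}$ when this target is not already in $R$, and every $B$-edge $f$ with $i\in f,\,j\notin f$ is replaced by $(f\setminus\{i\})\cup\{j\}$ when this target is not already in $B$. A case analysis on whether $e$ and $f$ actually move shows that this operation preserves $|R|$, $|B|$, the bound $m(R)\leq n/4$, and, crucially, distinguishability: any putative post-shift collision $|e'\cap f'|\geq 2$ can be traced back to a pair of edges in $R$ and $B$ whose intersection already had size $2$, contradicting the initial distinguishability. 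Iterating over all $i<j$, I may assume that $R$ is down-shifted and $B$ is up-shifted.

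For such a shifted pair, let $h$ be the largest integer with $\{1,h,h+1\}\in R$. By down-shiftedness together with the maximality of $h$, the triple $\{1,h+1,h+2\}$ is absent from $R$, forcing $R\subset \cR_h:=\{e\in\binom{[n]}{3}\colon|e\cap [h]|\geq 2\}$. Since $|\cR_h|=\binom{h}{3}+\binom{h}{2}(n-h)$, the density assumption $e(R)>\binom{n}{3}/2$ yields $h\geq (1-o(1))n/2$, while the matching constraint $m(R)\leq n/4$---combined with the fact that $R$ must nearly saturate $\cR_h$ (whose maximum matching using type-$(2,1)$ edges equals $\lfloor h/2\rfloor$ for $h\leq 2n/3$)---yields $h\leq (1+o(1))n/2$; thus $h=(1+o(1))n/2$. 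The near-saturation of $\cR_h$ by $R$ also forces (nearly) every pair with one endpoint in $[h]$ and one in $[h+1,n]$ to lie in some edge of $R$, and distinguishability then prevents any such pair from lying in an edge of $B$, so $B\subset \binom{[h+1,n]}{3}$ up to an $o(n^3)$ error.

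Combining the two containments gives
\[
	e(R)+e(B)\leq \binom{h}{3}+\binom{h}{2}(n-h)+\binom{n-h}{3}+o(n^3),
\]
which at $h=(1+o(1))n/2$ evaluates to $(5/8+o(1))\binom{n}{3}$, as required. The main obstacle lies in the structural step: pinning $h$ down near $n/2$ requires both the density lower bound (excluding $h<n/2$) and the matching upper bound (excluding $h>n/2$), neither of which is sufficient on its own; likewise, confining $B$ essentially to $\binom{[h+1,n]}{3}$ relies on $R$ being densely packed with type-$(2,1)$ edges so that the corresponding pairs block all $B$-edges that would otherwise cross the bipartition.
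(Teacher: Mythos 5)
Your opening move is the same as the paper's: you shift $R$ to the left and $B$ to the right simultaneously, and you correctly observe (and sketch why) this preserves $e(R)$, $e(B)$, $m(R)\leq n/4$, and distinguishability (this is the paper's \cref{lem:shift} and \cref{cor:shifted-R-B}). Your observation that a left-shifted $R$ satisfies $R\subset\cR_h$ where $h$ is the largest integer with $\{1,h,h+1\}\in R$ is also correct. From there, however, your argument departs sharply from the paper's and, more importantly, has a genuine gap at its central step.

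The gap is the assertion that ``$R$ must nearly saturate $\cR_h$,'' from which you deduce both $h\leq(1+o(1))n/2$ and, via distinguishability, that $B$ is essentially confined to $\binom{[h+1,n]}{3}$. Nothing in the hypotheses forces near-saturation of $\cR_h$: the density lower bound $e(R)>\binom{n}{3}/2$ only gives $|\cR_h|>\binom{n}{3}/2$ and hence $h\geq(1-o(1))n/2$, but if $h$ is larger then $|\cR_h|$ is much larger than $\binom{n}{3}/2$ and the density bound says nothing about how densely $R$ fills it. As a concrete illustration, take $R=\{e : e\cap[n/4]\neq\varnothing\}$: this is left-shifted, has $e(R)\approx\frac{37}{64}\binom{n}{3}>\binom{n}{3}/2$ and $m(R)\leq n/4$, yet $h=n-1$ and $R$ occupies only a constant fraction of $\cR_h$. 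Of course this $R$ forces $B=\varnothing$ and so is not extremal, but that is precisely the point: your argument needs to invoke the extremality of the pair $(R,B)$ to rule out such configurations, and you never do. Pinning $h$ to $(1+o(1))n/2$ and establishing near-saturation is not a routine consequence of the shifting reduction; it is essentially the whole content of the lemma, and it is being assumed rather than proved. (Your observation that near-saturation, once granted, forces $h\leq(1+o(1))n/2$ because $m(\cR_h)>n/4$ for $h$ noticeably above $n/2$ is fine, but the antecedent is the hard part.)

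For comparison, the paper avoids trying to pin down a global structure for $R$. After the shifting reduction it takes a fixed maximum matching $M$ in $R$ and double-counts $e(R\cup B)$ over all $\binom{|M|}{3}$ triples of matching edges, bounding for each such triple the contributions from crossing singletons, pairs and triples via the auxiliary hypergraphs $H$ and $Q$; the maximality of $M$ together with shiftedness forbids ``expanding'' triples, and an extensive case analysis (Claims \ref{cla:bad}--\ref{claim:final}, fed into \cref{fact:optimisation}) closes each case. This local double-counting, adapted from {\L}uczak and Mieczkowska, is what replaces the global structural claim you are missing. If you want to pursue your cleaner route you would need a stability-type argument showing that an extremal pair $(R,B)$ has $R$ close to $\cR_{n/2}$, and that appears to require comparable effort to the paper's case analysis rather than being a shortcut past it.
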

	Below we deduce \cref{lem:matching} as a simple consequence of \cref{lem:EG-5/8}
	and, consequently, for the proof of \cref{thm:main} it then only remains to establish \cref{lem:EG-5/8}.
	
	\begin{proof}[Proof of \cref{lem:matching}]
		For a given $\eps >0 $, we set $\gamma=\eps/11$ and let $n_0$ be sufficiently large.
		Given~$G$ and a tight component~$C$ satisfying the assumptions of \cref{lem:matching}, we first fix an arbitrary matching
		$M' \subseteq C$ of size $\gamma n$.
		
		Set $R = C - V(M')$, and let $B$ be obtained from $G-V(M')$ by deleting all edges of~$R$.
		After adding isolated vertices, we can assume that $R$ and $B$ are $3$-graphs on the vertex set~$\{1,\dots,n\}$
		with $e(R) \geq (1/2+\eps/2) \binom{n}{3}$ and
		\[
		e(R \cup B)
		\geq
		e(G) - 3\gamma n\cdot \binom{n}{2}
		\geq
		\Big(\frac{5}{8} +\eps-10\gamma\Big) \binom{n}{3}
		=
		\Big(\frac{5}{8} +\gamma\Big) \binom{n}{3}
		\]
		for sufficiently large~$n$.
		Moreover, $R$ and $B$ are distinguishable, since $C$ is a tight component.
		It follows that $R$ contains a matching $M''$ of size $n/4$ by \cref{lem:EG-5/8}
		and $M=M' \cup M''$ is the desired matching in $C$.
	\end{proof}

	\subsection{Shifting}\label{sec:shift}
	
	For a $k$-graph $G$ with $i$, $j \in V(G)$, the \emph{$(i,j)$-shift} of
	$G$, denoted by $\sh_{i \rightarrow j}(G)$, is obtained from $G$ by
	replacing each edge $e \in E(G)$ with
	\begin{align*}
		f = (e\setminus\{i\}) \cup \{j\}\
		\text{provided that}\
		i \in e\,,\
		j \notin e\tand
		f  \notin E(G)\,.
	\end{align*}
	We mainly consider $3$-graphs here. However, we will also study the \emph{shadow} $\partial G$ of a
	$3$-graph~$G$, defined as the $2$-graph on $V(G)$ containing an edge $e \in E(\partial G)$ whenever there is a
	triple $f \in E(G)$ with  $e \subset f$. In the proof of \cref{lem:EG-5/8}, we shall
	use the relation of shifted $3$-graphs and its shadow, and for that we defined the $(i,j)$-shift for $k$-graphs in general.
	Below we collect the basic facts about shifted $3$-graphs for our proof. For a
	more comprehensive review we refer to the survey of Frankl~\cite{Fra87} and the
	monograph of Frankl and Tokushige~\cite{FranklTokushige2018}.
	
	\begin{lemma}\label{lem:shift}
		For every $3$-graph $G$ with $i$, $j \in V(G)$ and distinguishable subgraphs $R$, $B\subseteq G$,
		the following holds:
		\begin{enumerate}[label=\alabel]
			\item \label{it:shift-a} $e(\shij(G))=e(G)$,
			\item \label{it:shift-b} $m(\shij(G))\leq m(G)$, and
			\item \label{it:shift-c} $\shji(R)$ and $\shij(B)$ are distinguishable.
		\end{enumerate}
	\end{lemma}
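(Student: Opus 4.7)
The plan is to reduce each of the three assertions to a direct inspection of how the $(i,j)$-shift acts on edges. Recall that a shift replaces each \emph{shiftable} edge $e$ (those with $i\in e$, $j\notin e$, and $(e\setminus\{i\})\cup\{j\}\notin E(G)$) by its partner $f=(e\setminus\{i\})\cup\{j\}$, while leaving all other edges untouched. Parts (a) and (b) are then standard, whereas (c) is the only one exploiting the opposite directions of the two shifts on $R$ and $B$.

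For (a), I would just observe that $e\mapsto f$ is injective on shiftable edges and that the image $f$ neither lies in $E(G)$ (by the shiftability condition) nor coincides with any un-shifted edge still present (those edges all still contain $i$, while $f$ does not). For (b), I would take a matching $M'\subseteq\shij(G)$ and split it as $M_1\dcup M_2$, with $M_1=M'\cap E(G)$ and $M_2$ the edges of $M'$ arising from the shift. Since every edge of $M_2$ contains $j$ and $M'$ is a matching, $|M_2|\leq 1$. If $M_2=\varnothing$ there is nothing to do; otherwise write the unique $f\in M_2$ as the shift of $e\in E(G)$. If no edge of $M_1$ uses $i$, then $M_1\cup\{e\}$ is a matching in $G$ of the required size. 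If the unique $e_i\in M_1$ through $i$ exists, it is itself un-shiftable, so $e_i':=(e_i\setminus\{i\})\cup\{j\}\in E(G)$, and the swap $\{f,e_i\}\mapsto\{e,e_i'\}$ yields a matching in $G$ of the same size, since $e$ and $e_i'$ are disjoint — their parts outside $\{i,j\}$ sit inside the originally disjoint $f$ and $e_i$ — and avoid the remaining edges of $M_1$, none of which touch $\{i,j\}$.

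The real work lies in (c), and I would argue by cases on whether $r\in\shji(R)$ and $b\in\shij(B)$ are pre-shift or shifted. When both are pre-shift, $|r\cap b|\leq 1$ is inherited from the distinguishability of $R$ and $B$. When both are shifted, writing $r=(r'\setminus\{j\})\cup\{i\}$ and $b=(b'\setminus\{i\})\cup\{j\}$ with $r'\in R$, $b'\in B$, one has $i\notin b$ and $j\notin r$, so $r\cap b\subseteq r'\cap b'$, which has size at most one. The subtle case is a mixed pair, say $r\in R$ unshifted and $b=(b'\setminus\{i\})\cup\{j\}$ shifted. Here the fact that $r$ was not shifted by $\shji$ gives three possibilities: either $j\notin r$ (so $r\cap b\subseteq r\cap(b'\setminus\{i\})\subseteq r\cap b'$), or $i\in r$ (which forces $r\cap b'=\{i\}$ and hence $r\cap b\subseteq\{j\}$), or $r'':=(r\setminus\{j\})\cup\{i\}\in R$, in which case any element of $r\cap b$ distinct from $j$ would, together with $i$, witness $|r''\cap b'|\geq 2$. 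In each sub-case, distinguishability of $R$ and $B$ closes the argument; the other mixed sub-case is symmetric. I expect this three-way analysis in the mixed case to be the main (still short) obstacle; the other cases are bookkeeping, provided one tracks the direction of each shift carefully.
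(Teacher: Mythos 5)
Your proposal is correct and complete. The paper proves only part~\ref{it:shift-c} in detail (parts \ref{it:shift-a} and~\ref{it:shift-b} are dispatched by a reference to the definition and to Łuczak--Mieczkowska's Lemma~3, respectively), so your self-contained matching-swap argument for~\ref{it:shift-b} is a welcome addition and is the standard one. For~\ref{it:shift-c}, you run a direct case analysis on whether each of $r$ and $b$ is an original or a shifted edge, while the paper argues by contradiction: assuming $|e\cap f|\geq 2$ with $e\in\shji(R)$, $f\in\shij(B)$, reducing ``by symmetry'' to $e\notin E(R)$, writing $e=uvi$ with pre-shift partner $e'=uvj\in E(R)$, and then splitting on whether $uv\subset f$ or $f=u'vi$. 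Both routes rest on the same mechanism — pull back the offending intersection to pre-shift edges of $R$ and $B$ and invoke distinguishability there — so the difference is organisational; your version avoids the somewhat terse ``by symmetry'' reduction, which is arguably cleaner, at the cost of a few more explicit subcases.

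One small slip worth fixing: in your sketch of~\ref{it:shift-a}, the parenthetical claim that the un-shifted edges ``all still contain $i$'' is false — an edge avoiding $i$ entirely is trivially un-shiftable and remains present. The conclusion you want (that a shifted image $f$ cannot collide with any un-shifted edge) already follows from your preceding observation that $f\notin E(G)$, so the argument stands; just drop or correct the parenthetical. Everything else checks out.
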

	\begin{proof}
		Assertion~\ref{it:shift-a} follows from the definition of an $(i,j)$-shift and
		property~\ref{it:shift-b} is a well known fact~\cite{LM14}*{Lemma~3}.
		
		For the proof of~\ref{it:shift-c}, we assume by contradiction that there are edges $e\in E(\shji(R))$ and
		$f\in E(\shij(B))$ with $|e\cap f|\geq 2$.
		Since $R$ and $B$ are distinguishable, we may assume by symmetry that $e \notin E(R)$, writing $e=uvi$ and $e'=uvj\in E(R)$.
		
		If $ uv \subset f$, then there would be an edge $uvw\in E(B)$ for some $w\in V(G)$,
		which contradicts the distinguishedness of $R$ and $B$. Consequently, without loss of generality we have
		$f=u'vi$ for some $u'\neq u$. If $u'=j$, then $f$ is also an edge of $B$, and $f$ and~$e'$ contradict
		that $R$ and $B$ are distinguishable. In the remaining case $u'\neq j$, we arrive at $u'vj\in E(B)$, which
		again contradicts the assumed distinguishability.
	\end{proof}
	
	As usual we shall study $3$-graphs (and their shadow), which are fully shifted in one direction.
	More precisely, we say a $k$-graph $G$ on the vertex set $\{1,\dots,n\}$ is \emph{left-shifted}
	if~$\shji(G)=G$ for all $i<j$ and, similarly, it is \emph{right-shifted} if $\shij(G)=G$ for all~$i<j$.
	It is easy to see that we can obtain a left-shifted $k$-graph from any
	given~$G$ after a finite sequence of $(j,i)$-shifts with $i<j$. Combining this fact with \cref{lem:shift}
	tells us that there are shifted extremal examples in $\cM(n,s,t)$.
	\begin{corollary}\label{cor:shifted-R-B}
		For all integers $n$, $s$, and $t$ there is a pair $(R,B) \in \cM(n,s,t)$
		such that $R$ and $\partial R $ are left-shifted, while $B$ and $\partial B$ are right-shifted.
	\end{corollary}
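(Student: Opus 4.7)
My plan is to start from an arbitrary $(R_0,B_0)\in\cM(n,s,t)$ and iteratively apply coordinated shifts $\shji$ to $R$ and $\shij$ to $B$ (with $i<j$) until $R$ is fully left-shifted and $B$ is fully right-shifted; the shiftedness of the shadows will then follow automatically. The key observation is that \cref{lem:shift} is precisely what is needed to keep the pair inside $\cM(n,s,t)$ at every step: part~\ref{it:shift-a} preserves $e(R)$ and $e(B)$, and hence both $e(R\cup B)=\mu(n,s,t)$ and $e(R)>t$; part~\ref{it:shift-b} preserves $m(R)\le s$; and part~\ref{it:shift-c} preserves the distinguishability of $R$ and $B$.

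I would split the iteration into two phases. In phase one, while there exist $i<j$ with $\shji(R)\neq R$, I replace $(R,B)$ by $\bigl(\shji(R),\shij(B)\bigr)$. Termination follows from the potential $\Phi(R)=\sum_{e\in E(R)}\sum_{v\in e} v\in\mathbb{Z}_{\geq 0}$, which strictly decreases at each non-trivial step of this phase. When phase one ends, $R$ is left-shifted. In phase two, while there exist $i<j$ with $\shij(B)\neq B$, I again replace $(R,B)$ by $\bigl(\shji(R),\shij(B)\bigr)$; the $R$-side is a no-op because $R$ is already left-shifted, but invoking it formally keeps \cref{lem:shift}\,\ref{it:shift-c} available to control distinguishability. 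This phase terminates by the analogous, now strictly increasing, potential for $B$ and leaves $R$ untouched.

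The remaining task is to check that $\partial R$ is then automatically left-shifted and $\partial B$ automatically right-shifted; this is the step I expect to require the most care, though it amounts to a short case check. Given $\{a,b\}\in\partial R$ and $i<j$ with $j\in\{a,b\}$ and $i\notin\{a,b\}$, I want to conclude $(\{a,b\}\setminus\{j\})\cup\{i\}\in\partial R$. I would pick $y$ with $\{a,b,y\}\in E(R)$: if $i=y$, then the desired pair already lies in $\{a,b,y\}$ and so in $\partial R$; otherwise $i\notin\{a,b,y\}$, and the left-shiftedness of $R$ applied to the edge $\{a,b,y\}$ (which contains $j$ but not $i$) forces $(\{a,b,y\}\setminus\{j\})\cup\{i\}\in E(R)$, whose shadow contains the desired pair. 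The argument for the right-shiftedness of $\partial B$ is verbatim after reversing the order.
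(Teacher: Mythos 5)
Your proposal is correct and takes essentially the same approach as the paper: iterate the coordinated shifts $\bigl(\shji(R),\shij(B)\bigr)$, use \cref{lem:shift} to stay inside $\cM(n,s,t)$, prove termination via a monotone potential, and then derive the shiftedness of the shadows by a short case check on a witnessing triple. The only cosmetic differences are that you split the iteration into two explicit phases (the paper runs a single combined process) and you use the global potential $\sum_{e\in E(R)}\sum_{v\in e}v$ rather than the paper's local degree-of-$j$ monotonicity, but these are equivalent routes to the same termination argument.
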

	\begin{proof}
		Consider an arbitrary pair $(R',B')\in\cM(n,s)$ and let $1\leq i<j\leq n$. It follows from
		\cref{lem:shift}\,\ref{it:shift-a}--\ref{it:shift-c} that $\big(\shji(R'),\shij(B')\big)$ is also
		in $\cM(n,s,t)$. Since the degree of the vertex $j$ in $\shji(R')$ is smaller than in $R'$ if
		$R'\neq  \shji(R')$ and, similarly, the degree increases in $\shij(B')$ if $B'\neq  \shij(B')$,
		it follows that after a finite sequence of such simultaneous~$(j,i)$- and $(i,j)$-shifts in the respective subgraphs,
		we arrive at a pair $(R,B)\in \cM(n,s,t)$ with $R$ being left-shifted and $B$ being right-shifted.
		
		Finally, let us argue that $\partial R$ is left-shifted.
		To this end, consider $i<j$ and $vj\in E(\partial R)$. Consequently, there is some vertex $u$ such that
		$uvj\in E(R)$. Since $R$ is left-shifted, we have either have $u=i$ or $uvi\in E(R)$, but in both cases
		we arrive at $vi\in E(\partial R)$, and this shows that $\partial R$ is left-shifted indeed.
		The argument for $\partial B$ follows analogously.
	\end{proof}

	\subsection{Local structure}
	
	In the proof of \cref{lem:matching}, we reduce the initial problem to a optimisation question about local structure in a maximal matching.
	The assumptions of the following lemma correspond to the structural conditions identified over the course of the proof of \cref{lem:matching} and are then processed into the correct quantitative answer.
	
	\begin{lemma}\label{lem:local-structure}
		Let $T = \{M_1 , M_2 , M_3\}$ consists of three pairwise disjoint triples $M_\ell = (i_\ell,j_\ell,k_\ell)$ of integers such that $i_\ell < j_\ell < k_\ell$ for $1\leq \ell \leq 3$.
		Let $R$ and $B$ be non-uniform, partite hypergraphs on $M_1\dcup M_2\dcup M_3$ with singletons $R_1$ and $B_1$, pairs $R_2$ and $B_2$ as well as triples~$R_3$ and $B_3$.
		Suppose $R$ and $B$ satisfy the following properties:
		\begin{enumerate}[label=\alabel]
			\item \label{itm:shadow} The hypergraph $B$ is down-closed.
			
			\item \label{itm:distinguishable} The edges of  $R_3$ and $B_3$ are distinguishable, and $R_2$ and $B_2$ are disjoint.
			
			\item\label{itm:expanding} There are no three disjoint edges in $T \cup R$ whose union intersects $\{i_1,i_2,i_3\}$ in at most two elements.
			
			\item \label{itm:crossing}
			The edges of $R$ and $B$ are crossing with respect to $T$.
			
			\item \label{itm:shifted}
			The graph $R_2$ is left-shifted in the crossing sense, that is,
			if $x' \leq x$ and $y' \leq y$ are two pairs of vertices from different triples of $T$ with $xy \in R_2$, then $x'y' \in R_2$.
			Similarly, $B_2$ are right-shifted (in the analogous way).
			
			\item \label{itm:weights}
			Suppose $r\colon R_1 \cup R_2 \to [0,1]$, $b\colon B_1 \cup B_2 \to [0,1]$
			satisfy
			\begin{align*}
				r(u)  + b(v)  & \leq 1\ \text{for all $u\in R_1$, $v\in B_1$}\,     \\
				r(vw) + b(v)  & \leq 1\ \text{for all $vw\in R_2$, $v\in B_1$}\,    \\
				r(u)  + b(uw) & \leq 1\ \text{for all $u\in R_1$, $uw\in B_2$, and} \\
				r(vw) + b(uw) & \leq 1\ \text{for all  $vw\in R_2$, $uw\in B_2$}\,.
			\end{align*}
			{Moreover, $b(i_\ell) \leq b(j_\ell) \leq b(k_\ell)$ for every $1 \leq \ell \leq 3$.}
			
			\item \label{itm:blue-star}  For distinct $1\leq p, \ell \leq 3$ with $k_p > k_\ell$, the pairs of $B_2$ between $M_\ell$ and $M_p$ form a star centred at $k_p$.
			
			\item \label{itm:red-pairs} The singletons of $R_1$ are among $i_1$, $i_2$, and $i_3$.
			For every $1\leq p < \ell \leq 3$, there are at most $5$ pairs of $R_2$ between $M_\ell$ and $M_p$ with equality if and only if the pairs are $i_pi_\ell$, $i_pj_\ell$, $i_pk_\ell$,  $j_pi_\ell$, and~$k_pi_\ell$.
			
			\item \label{itm:many-red-triples} If $|R_3| \geq 20$, then there are at most $4$ pairs of $R_2$ between any two triples of $T$.
			{Moreover, if $|R_3|\geq 22$ and $R_2$ contains
				four pairs between any two triples of $T$, then none of those pairs
				is incident to $k_1$, $k_2$, or $k_3$.}
		\end{enumerate}
		Finally, suppose reals $\sigma$, $\beta$, $q_1$, and $q_2$ satisfy
		\[
		1-2^{-3}\leq \sigma \leq \frac{1}{4}\,,\qquad
		\beta = \frac{1}{\sigma}-3\,,
		\]
		and
		\begin{align}
			q_1   = \sum_{u \in R_1} r(u)  + \sum_{v \in B_1}b(v)
			\qand
			q_2 = \sum_{vw \in R_2} r(vw)  + \sum_{uw \in B_2}b(uw)\,.
			\label{eq:defqs}
		\end{align}
		Then $\sigma^3\left(\beta^3 + \beta^2 q_1 + \beta q_2 + e(R_3 \cup B_3)\right) \leq 5/8.$
	\end{lemma}

	\subsection{Proof of Lemma~\ref{lem:EG-5/8}}
	
	For a (not necessarily uniform) hypergraph $H$ and subsets $S$, $W \subset
	V(G)$, we denote by $\deg_H(S;W)$ the number of edges $S \cup Y$ in $H$
	with $Y \subset W$. To emphasise (or specify) the uniformity of an edge
	in a hypergraph, we sometimes speak of an (unordered) triple, pair or
	singleton.
	
	\begin{proof}[Proof of \cref{lem:EG-5/8}]
		Given $\gamma > 0$, let $n$ be sufficiently large and
		consider a pair
		\begin{align*}
			(R,B) \in \cM(n,n/4,\tbinom{n}{3}/2)\,.
		\end{align*}
		So in particular, $R$ and $B$ are distinguishable $3$-graphs on the vertex set $[n]=\{1,\dots,n\}$, a largest matching in $R$ has size at most $n/4$ and $e(R) \geq \binom{n}{2}/2$.
		Moreover, $e(R \cup B)$  is maximal subject to these constraints.
		We have to show that $e(R\cup B) \leq (5/8 + \gamma) \binom{n}{3}$.
		
		By \cref{cor:shifted-R-B} we can assume that $R$,
		$\partial R$ are left-shifted, while~$B$, $\partial B$ are
		right-shifted. We refer to the edges of $R$, $\partial R$ and
		$B$, $\partial B$ as \emph{red} and \emph{blue}, respectively.
		Let $M = \{(i_\ell,j_\ell,k_\ell) \colon 1 \leq \ell \leq s\}$ be a
		largest matching in $R$ with $i_\ell < j_\ell < k_\ell$ for each
		$\ell$.
		By the resolution of Erdős' Matching Conjecture (\cref{con:EMC}) for $3$-graphs~\cites{LM14,Fra17}, we can assume that $s = \sigma n$ for a real $\sigma$ satisfying
		\[
		\big(1-2^{-1/3}\big)
		\leq
		\sigma
		\leq \frac{1}{4}\,.
		\]
		We partition the vertex set of $M$ into three parts
		\begin{align*}
			V(M) = I \cup J \cup K
		\end{align*}
		such that for every edge $(i,j,k) \in M$, we have $i \in I$,
		$j \in J$, and $k \in K$.
		A set of vertices is called \emph{crossing} if it contains at most one
		vertex of every matching edge of $M$.
		(This includes singletons, naturally.)
		
		The remainder of the argument proceeds by analysing the local
		configurations of the matching $M$.
		Indeed, double counting allows us to bound the edges in $R \cup B$ by focusing on a typical triple of edges from $M$. For such a triple we shall verify the assumptions of \cref{lem:local-structure}, which then provides an upper
		bound to the number of involved edges in $R\cup B$.
		
		To formalise the double counting argument, denote by $W$ the set
		of vertices that are not covered by $M$. Obviously, none of the edges
		of $R$ are contained in $W$ by maximality of $M$.
		Now consider a triple $T$ of matching edges from $M$.
		We denote the crossing subsets of~$V(T)$ by $\crs(T)$.
		Observe that every singleton of $V(M)$ appears in exactly~$\tbinom{|M|-1}{2}$ such triples, while every crossing pair of $V(M)$ appears in exactly $|M|-2$ such triples.
		Moreover, every crossing triple appears of course only in a single triple.
		This leads to the following definitions
		\begin{align*}
			e_1 (T) & = \frac{1}{\binom{|M|-1}{2}} \sum_{u \in  \crs(T)}    \deg_R(u;W) + \deg_B(u;W)\,, \\
			e_2(T)  & = \frac{1}{|M|-2} \sum_{uv \in \crs(T)}    \deg_R(uv;W) + \deg_B(uv;W)\,,          \\
			e_3(T)  & =  |  \crs(T) \cap (R \cup B) |\,.
		\end{align*}
		
		Given this setup, we double count the edges of $R \cup B$ in $V(M)$ along the
		triples~$T$ of matching edges from $M$ to obtain
		\begin{align*}
			e(R \cup B)
			& \leq
			\sum_{T\in \binom{M}{3}} \big(e_1(T)  + e_2(T) + e_3(T)\big) + \binom{|W|}{3}+\frac{\gamma}{8}\binom{n}{3}
			\,,
		\end{align*}
		where the term $\binom{|W|}{3}$ accounts for the blue edges in $W$ and
		$\frac{\gamma}{8}\binom{n}{3}$ bounds the number of remaining non-crossing edges.
		Recalling that $|M| = \sigma n$ and $|W| = (1-3\sigma) n$, we then rewrite and bound the right-hand side as follows
		\begin{align}
			e(R \cup B)
			& \leq
			\sum_{T\in \binom{M}{3}}\bigg(e_1(T)  + e_2(T) + e_3(T) + \frac{\binom{|W|}{3}}{\binom{|M|}{3}}\bigg)
			+\frac{\gamma}{8}\binom{n}{3}\nonumber \\
			& <
			\sum_{T\in \binom{M}{3}}\bigg(e_1(T)  + e_2(T) + e_3(T) + \Big(\frac{1}{\sigma}-3\Big)^3\bigg) +  \frac{\gamma}{4} \binom{n}{3}\,. \label{equ:global-bound-on-R-cup-B}
		\end{align}
		
		For the remainder, we aim
		to suitably bound
		$e_1(T) + e_2(T) + e_3(T) + (1/\sigma-3)^3$
		for most
		triples~$T$ of matching edges in $M$.
		
		In our analysis, we shall focus on crossing singletons and pairs that reside in many red triples intersecting $W$.
		For that, we define an auxiliary (non-uniform) hypergraph $H$ on~$V(M)$ with edge set $M \cup R_1 \cup R_2 \cup R_3$, where
		\begin{align*}
			R_1 & = \{ u  \colon \deg_{R}(u;W) \geq 20n  \}\,,                               \\
			R_2 & = \{ uv   \colon \text{$uv$ is crossing and $\deg_R(uv;W) \geq 20$}  \}\,, \\
			R_3 & = \{ uvw   \colon \text{$uvw$ is a {crossing} edge in $R$}\}\,.
		\end{align*}
		Note that since $R$ is left-shifted and due to the monotonicity of the degrees, the hypergraphs with edges
		$R_1$, $R_2$, and $R_3$ are each left-shifted when restricting to crossing edges (see \cref{lem:local-structure}\,\ref{itm:shifted}). So for
		instance, $i_1j_2 \in R_2$ implies that $i_1i_2 \in R_2$.
		We call an edge $e$ of~$R$ \emph{supported} if $e \cap V(M) \in E(H)$, and denote by $R^+ \subset R$ the $3$-graph of supported edges.  Observe
		that the number of unsupported edges of $R$ is at most quadratic in $n$.
		Hence, for sufficiently large~$n$, this quantity can be bounded by
		\begin{align}\label{equ:supported}
			e(R \sm R^+) < \frac{\gamma}{4}\binom{n}{3} \,.
		\end{align}
		
		As it turns out, a few triples of matching edges may exhibit a rather extrovert degree structure.
		We capture this by calling a triple $T$ \emph{expanding}, if~$V(T)$ contains three pairwise disjoint edges of $H$ whose union intersects~$I$ in at most $2$ vertices, and \emph{steady} otherwise.
		Fortunately, there cannot be too many expanding triples due to the maximality of the matching, which was already
		observed by {\L}uczak and Mieczkowska~\cite{LM14}*{Claim~4}.
		For the sake of completeness, we spell out their argument.
		\begin{claim}\label{cla:bad}
			No three disjoint triples of matching edges are expanding.
		\end{claim}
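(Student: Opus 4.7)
\emph{Plan.} I argue by contradiction: assume three pairwise disjoint triples $T_1, T_2, T_3 \in \binom{M}{3}$ are all expanding, and I will construct a matching in $R$ of size $s+1$, contradicting the maximality of $M$. Unpacking the expanding condition for each $T_m$, I fix pairwise disjoint $H$-edges $f_1^{(m)}, f_2^{(m)}, f_3^{(m)} \subseteq V(T_m)$ whose union $U_m$ satisfies $|U_m \cap I| \leq 2$; since $|V(T_m) \cap I| = 3$, I can pick an $I$-vertex $a_m \in V(T_m) \setminus U_m$. Writing $a_m = i_{\ell_m}$, I relabel the $T_m$ if necessary so that $a_1 < a_2 < a_3$.

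The first key step is to show $\{a_1, a_2, a_3\} \in E(R)$ using that $R$ is left-shifted. Starting from the matching edge $(i_{\ell_3}, j_{\ell_3}, k_{\ell_3}) \in E(R)$ and noting $a_2 = i_{\ell_2} < i_{\ell_3} < j_{\ell_3}$ with $a_2 \notin \{i_{\ell_3}, j_{\ell_3}, k_{\ell_3}\}$ (the matching edges of $M$ are pairwise disjoint), left-shiftedness yields $\{a_2, i_{\ell_3}, k_{\ell_3}\} \in E(R)$; a second application, with $a_1 = i_{\ell_1} < k_{\ell_3}$ replacing $k_{\ell_3}$, produces $\{a_1, a_2, a_3\} \in E(R)$. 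The second key step is a greedy extension of each witness $f_j^{(m)}$ to an $R$-edge $e_j^{(m)} \supseteq f_j^{(m)}$, with all nine $e_j^{(m)}$ pairwise disjoint and disjoint from $\{a_1, a_2, a_3\}$: witnesses in $M$ or in $E_3 \cap E(R)$ are kept as-is; a singleton $u \in E_1$ extends by two $W$-vertices (using $\deg_R(u;W) \geq 20n$); a pair $uv \in E_2$ extends by one $W$-vertex (using $\deg_R(uv;W) \geq 20$). At most $18$ $W$-vertices are consumed in total, and both degree lower bounds dominate this constant by a large margin, so a greedy choice succeeds.

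The ten pairwise disjoint $R$-edges $\{a_1a_2a_3\} \cup \{e_j^{(m)} \colon 1 \leq j, m \leq 3\}$, together with $M \setminus (T_1 \cup T_2 \cup T_3)$, form a matching in $R$ of size $s + 1$, which is the desired contradiction. The main obstacle is the possibility that some witness $f_j^{(m)} \in E_3$ is a crossing triple lying outside $E(R)$, since the extension step implicitly uses that $f_j^{(m)}$ already contains (or equals) an $R$-edge; here the left-shiftedness of $R$, together with the monotonicity used to build $E_1$ and $E_2$ as left-shifted families, allows us to argue that the three disjoint $H$-edges witnessing expansion may always be chosen so that any $E_3$-witness in fact lies in $E(R)$ (or else may be replaced by a smaller-uniformity witness in $E_1$ or $E_2$), which preserves the validity of the greedy extension above.
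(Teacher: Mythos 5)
Your argument mirrors the paper's proof almost exactly: you collect the three free $I$-vertices, order them, use left-shiftedness of $R$ twice to produce the crossing $R$-triple $\{a_1,a_2,a_3\}$, greedily extend the remaining witnesses of small uniformity into $W$, and contradict the maximality of $M$. The double shift you perform (replacing $j_{\ell_3}$ first, then $k_{\ell_3}$) differs from the paper's order but is equally valid, and the degree bounds $20n$ and $20$ indeed dominate the at most $18$ vertices consumed. So the core of the proof is correct and follows the paper's route.

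The paragraph you add about ``the main obstacle'' pinpoints a real subtlety that the paper's terse proof also does not address: $E_3$ is defined as \emph{all} crossing triples, not just those in $E(R)$, and the displayed extension argument only handles $E_1 \cup E_2$-witnesses. However, the fix you sketch is not actually argued and is not obviously true. There is no evident way, in general, to ``re-choose'' the expanding witnesses so that every $E_3$-witness lies in $E(R)$: a priori $T_m$ could be expanding only via some crossing triple that is not an $R$-edge, and a left-shift of such a witness can change which matching edge a vertex lies in, change its intersection with $I$, or collide with the other two witnesses, none of which you control. The remark ``or else may be replaced by a smaller-uniformity witness in $E_1$ or $E_2$'' likewise needs the relevant crossing singleton or pair to satisfy the degree thresholds $20n$ or $20$, which does not follow from anything in your setup.

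The clean resolution is simply to restrict $E_3$ to crossing triples that are edges of $R$. One checks that this does not weaken the rest of the argument: every place the paper certifies that some triple is expanding—\cref{cla:steady}, \cref{cl:23}, and \cref{claim:last}—does so using $E_1 \cup E_2$-edges, $M$-edges, or red triples already known to lie in $E(R)$, so the restricted $E_3$ suffices there. With that definition in place, your greedy-extension argument (and the paper's) goes through without the hand-waving.
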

		\begin{proof}
			Suppose that there exist $9$ disjoint edges
			$\{(i_\ell,j_\ell,k_\ell)\colon 1 \leq \ell \leq 9\}\subset M$
			such among their vertices one can find
			a set of $9$ pairwise disjoint edges $H'\subset H$,
			which do not cover the vertices $i_3$, $i_6$, and $i_9$.
			
			Without loss of generality, we may assume $i_3<i_6<i_9$.
			So in particular, we have $i_6<i_9<j_9<k_9$.
			Since $e=i_9j_9k_9\in M$ is an edge of the left-shifted hypergraph~$R$, it follows that $e'=i_6i_9j_9$ is also an edge
			of~$R$ by considering a $(k_9,i_6)$-shift for $e$. Similarly, the fact that $i_3<i_6<i_9<j_9$
			and considering an $(j_9,i_3)$-shift of $e'$ tells us that $e''=i_3i_6i_9$ is in $R$. Therefore, we find $10$ pairwise
			disjoint edges $H'' =H'\cup\{e''\}\subset H$.
			
			Furthermore, since edges from $R_1\cup R_2$
			have large degrees,  all edges from $H''$ which belong to $R_1\cup R_2$
			can be simultaneously extended to disjoint edges of $R$ by
			adding  vertices of $W$. This leads to
			a matching~$M' \subset R$ of size $|M|+1$
			contradicting the maximality of~$M$.
		\end{proof}
		
		As a consequence of \cref{cla:bad}, there exist six or fewer edges in
		the matching $M$ such that each expanding triple contains one of these
		edges.
		Hence the number of expanding triples does not exceed $6\binom{|M|}{2}$.
		Note that we can crudely acknowledge the contribution of such a triple~$T$ in the bound~\eqref{equ:global-bound-on-R-cup-B} by
		\begin{align*}
			e_1(T)  + e_2(T) + e_3(T) + \Big(\frac{1}{\sigma}-3\Big)^3 \leq \bigg(9\frac{\binom{|W|}{2}}{\binom{|M|-1}{2}}+27\frac{|W|}{|M|-2}+27+\Big(\frac{1}{\sigma}-3\Big)^3\bigg)  \,,
		\end{align*}
		which is bounded by a constant independent of $n$.
		Consequently, their total contribution again quadratic in $n$ and thus limited to
		\begin{align}\label{equ:expandingT}
			\sum_{T\ \text{expanding}}\bigg(e_1(T)  + e_2(T) + e_3(T) + \Big(\frac{1}{\sigma}-3\Big)^3\bigg)
			< \frac{\gamma}{4}\binom{n}{3}\,.
		\end{align}
		
		For the remainder of the argument, we
		fix a steady triple
		\begin{align*}
			T = \{M_1,M_2,M_3\}
		\end{align*}
		of matching
		edges in $M$ that maximises the sum $e_1(T) + e_2(T) + e_3(T)$.
		Our plan is to apply \cref{lem:local-structure} to bound the degrees of $H$ restricted to $T$.
		The following definitions frame the quantities of interest.
		For $v$ and $uv\in\crs(T)$, we set
		\begin{align*}
			r(v)  & = \frac{\deg_{R^+}(v;W)}{\binom{|W|}{2}}\,, &
			r(uv) & = \frac{\deg_{R^+}(uv;W)}{|W|} \,,            \\
			b(v)  & =   \frac{\deg_B(v;W)}{\binom{|W|}{2}}\,,   &
			b(uv) & =  \frac{\deg_B(uv;W)}{|W|}\,,                \\
			q_1   & = \sum_{v\in\crs{(T)}} r(u) + b(v) \,,      &
			q_2   & = \sum_{uv \in \crs{(T)}} r(uv)  + b(uv)\,.
		\end{align*}
		Consequently, in view of the estimates~\eqref{equ:supported} and~\eqref{equ:expandingT}
		we can recontextualise the right-hand side of  inequality~\eqref{equ:global-bound-on-R-cup-B} as
		\begin{align*} 
			e(R\cup B)
			& < \binom{|M|}{3}\cdot \bigg(e_1(T) + e_2(T) + e_3(T) + \Big(\frac{1}{\sigma}-3\Big)^3\bigg) + \frac{3\gamma}{4}\binom{n}{3} \\
			& =
			\binom{|M|}{3} \cdot \bigg( \frac{\binom{|W|}{2}}{\binom{|M|-1}{2}}  q_1 + \frac{\binom{|W|}{2}}{|M|-2} q_2+e_3(T)+\Big(\frac{1}{\sigma}-3\Big)^3 \bigg) + \frac{3\gamma}{4}\binom{n}{3}
		\end{align*}
		
		Moreover, we set and note
		\begin{align*}
			\beta = \frac{1}{\sigma}-3\geq 1\,,
		\end{align*}
		where the lower bound follows from $\sigma\leq 1/4$.
		Since $|M| = \sigma n$ and $|W| = (1-3\sigma) n$, for sufficiently large $n$, we may approximate
		\begin{align*}
			\frac{\tbinom{|W|}{2}}{\tbinom{|M|-1}{2}} \leq \beta^2 + \frac{\gamma}{8\cdot 9}\,,
			\qquad
			\frac{|W|}{{|M|-2}} \leq \beta + \frac{\gamma}{8\cdot 27}\,,
			\qquad\text{and}\qquad
			\binom{|M|}{3}\leq\sigma^3\binom{n}{3}\,.
		\end{align*}
		Therefore, we arrive at
		\begin{align*}
			e(R\cup B)
			<
			\Big(\sigma^3\cdot\left(\beta^2 q_1 + \beta q_2 + e_3(T)+\beta^3\right)+\gamma\Big)\binom{n}{3}\,.
		\end{align*}
		To finish the proof of \cref{lem:matching}, we aim to show that
		\begin{align*}
			\sigma^3\left(\beta^3 + \beta^2 q_1 + \beta q_2 + e_3(T)\right) \leq \frac{5}{8}\,.
		\end{align*}
		Consequently, it is left to verify the structural assumptions~\ref{itm:shadow}--\ref{itm:many-red-triples} of
		\cref{lem:local-structure} for the maximal triple $T$ to conclude the proof of \cref{lem:matching}.
		
		To be precise, we shall apply \cref{lem:local-structure} with $R_1$, $R_2$, and $R_3$ restricted to $T \subset M$ and their union playing the rôle of $R$.
		Moreover, the down-closure $B$ restricted to $T$ gives rise to $B_1$, $B_2$, and $B_3$.
		Now let us verify the assumptions of \cref{lem:local-structure}.
		To begin, note that parts~\ref{itm:shadow} and \ref{itm:distinguishable} follow immediately from the definition. Part~\ref{itm:expanding} captures the steadiness of~$H$.
		Moreover, part~\ref{itm:crossing} for the blue edges follows because $R_3$ and $B_3$ are distinguishable and $T \subset M \subset R$.
		Also, part~\ref{itm:shifted} follows as noted after the definitions of $R_1$, $R_2$, and $R_3$ for $R$ and it is inherited from the right-shiftedness of all blue triples for $B$.
		
		Note that, the functions $r(\cdot)$ and $b(\cdot)$ are restricted to  $R_1$, $R_2$ as well as $B_1$, $B_2$, which act as their support anyways. Consequently, the following claim verifies assumption~\ref{itm:weights} of \cref{lem:local-structure}.
		
		\begin{claim}\label{rem:constraints}
			For $u,v,w \in V(T)$, we have
			\begin{align*}
				r(u)  + b(v)  & \leq 1\,, &
				r(uv) + b(u)  & \leq 1\,,   \\
				r(u)  + b(uw) & \leq 1\,, &
				r(uv) + b(uw) & \leq 1\,.
			\end{align*}
			Moreover, $b(i_\ell) \leq b(j_\ell) \leq b(k_\ell)$ for every $1 \leq \ell \leq 3$.
		\end{claim}
		
		\begin{proof}
			We focus on the first two cases, the others follow similarly.
			Note that since~$R$ and $B$ are distinguishable, there are no two vertices $x,y$ such that $uxy$ is in $R$ and $vxy$ is in $B$.
			So in particular, $ {\deg}_R(u;W) +  {\deg}_B(v;W) \leq \binom{|W|}{2}$, which gives
			$r(u)+ b(v) \leq 1$.
			
			Moreover, the distinguishability
			implies that the blue link of any vertex $u$ can only use vertices in the complement of the red neighbourhood of a pair $uv$ and the second inequality $r(uv) + b(u)  \leq 1$ follows.
			
			Finally, we have $b(i_\ell) \leq b(j_\ell) \leq b(k_\ell)$ for every $1 \leq \ell \leq 3$, since the blue triples are right-shifted and $i_\ell < j_\ell < k_\ell$.
		\end{proof}
		
		We conclude with the three structural assumptions of \cref{lem:local-structure}.
		Without loss of generality, suppose that $M_\ell = (i_\ell,j_\ell,k_\ell)$ for $1\leq \ell \leq 3$ and the following claim  yields part~\ref{itm:blue-star}.
		
		\begin{claim}\label{cla:red-shadow}
			For any two matching edges $(i_\ell,j_\ell,k_\ell)$ and $(i_{p},j_{p},k_{p})$ of $T$ with ${k_\ell < k_{p}}$, the edges of $\partial B$ between the two form a star centred in $k_p$.
		\end{claim}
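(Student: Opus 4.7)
The plan is to reduce both assertions to the shiftedness of the shadows together with the key observation that $\partial R \cap \partial B = \emptyset$. The latter is immediate from distinguishability: if a pair $uv$ lay in both shadows, then there would be triples $uvc \in R$ and $uvd \in B$ sharing the two vertices $u$ and $v$, contradicting that $R$ and $B$ are distinguishable.

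For the first assertion, I would take a crossing pair $\{u,v\} \subseteq I \cup J$ with $u < v$. Since the pair is crossing, $u$ and $v$ lie in distinct matching edges, say $u \in V(M_a)$ and $v \in V(M_b)$ with $a \neq b$. As $v \in I \cup J$, we have $v \in \{i_b, j_b\}$ and hence $v < k_b$, and the red matching edge $M_b$ yields $v k_b \in \partial R$. Because $u \notin V(M_b)$, the vertex $u$ differs from both $v$ and $k_b$ and satisfies $u < k_b$, so left-shiftedness of $\partial R$ permits shifting $k_b$ down to $u$, producing $uv \in \partial R$.

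For the second assertion, I would fix matching edges $M_\ell$ and $M_p$ with $k_\ell < k_p$ and argue by contradiction: suppose some $ab \in \partial B$ has $a \in V(M_\ell)$, $b \in V(M_p)$, and $b \neq k_p$. Then $b \in \{i_p, j_p\}$, so $b < k_p$, while $a \leq k_\ell < k_p$; in particular, $k_p \notin \{a, b\}$ and $a < k_p$. Right-shiftedness of $\partial B$ then allows shifting $a$ up to $k_p$, giving $k_p b \in \partial B$. On the other hand, $(i_p, j_p, k_p) \in R$ forces $k_p b \in \partial R$, contradicting $\partial R \cap \partial B = \emptyset$.

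I do not anticipate a real obstacle: the whole argument is a direct unpacking of shiftedness in the appropriate direction (down for $\partial R$, up for $\partial B$). The only point requiring a little care is checking, in each shift, that the target vertex differs from the other endpoint of the pair; this is guaranteed by the crossing condition in the first assertion and by the strict inequality $k_\ell < k_p$ in the second.
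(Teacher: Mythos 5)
Your proof is correct, and it rests on the same two ingredients as the paper's: the disjointness of $\partial R$ and $\partial B$ (from distinguishability) and the shiftedness of the shadows from \cref{cor:shifted-R-B}. The only presentational difference is in the second assertion, where the paper works positively on the red side (shifting $\partial R$ downwards from $j_pk_p$ to exhibit the relevant pairs of $I\cup J$ as red, then appealing to disjointness), whereas you argue by contradiction on the blue side (shifting a putative blue pair $ab$ upwards to $k_p$ and then invoking disjointness with the red pair inside $M_p$); these are dual formulations of the same shift-plus-disjointness mechanism.
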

		\begin{proof}
			Since $k_\l<k_p$, the edge $j_pk_\ell$ is in $\partial R$.
			(Otherwise, we could perform a $(k_p,k_\ell)$-shift on~$\partial R$ replacing $j_pk_p$ with $j_pk_\ell$.)
			Since~$\partial R$ is left-shifted, it follows that $j_\ell j_p$, $i_\ell j_p$, $i_p j_\ell$, and~$i_\ell i_p$ are also in $\partial R$.
			Note that this implies in particular that the only possible edges of $\partial B$ are incident to $k_p$.
		\end{proof}
		
		The next observation, proved by {\L}uczak and Mieczkowska~\cite{LM14}*{Claim~5}, is a consequence of steadiness and implies part~\ref{itm:red-pairs}.
		
		\begin{claim}\label{cla:steady}
			The singletons of $R_1$ in $T$ are in $I$.
			Moreover, $R_2$ has at most $5$ pairs between any two matching edges of $T$ with equality if and only if all $5$ pairs intersect~$I$.
		\end{claim}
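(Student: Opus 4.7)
The plan is to argue both assertions by contradiction: if either fails, I will exhibit three pairwise disjoint edges of $H$ inside $V(T)$ whose union meets $I$ in at most two vertices, which contradicts the steadiness of $T$.

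For the first assertion, suppose some singleton $\{u\} \in E_1$ has $u \in V(T) \cap (J \cup K)$. Then $u \in V(M_\ell)$ with $u \in \{j_\ell, k_\ell\}$ for some $\ell \in \{1,2,3\}$. Since $R$ is left-shifted, the link degree $\deg_R(v;W)$ is monotone non-increasing in $v$ along $V(M_\ell)$; in particular $\deg_R(i_\ell;W) \geq \deg_R(u;W) \geq 20n$, so $\{i_\ell\} \in E_1$. Picking any $q \in \{1,2,3\} \setminus \{\ell\}$, the singletons $\{i_\ell\}$, $\{u\}$, and the matching edge $M_q$ are three pairwise disjoint edges of $H$ contained in $V(T)$, and their union meets $I$ only in $\{i_\ell, i_q\}$, making $T$ expanding.

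For the second assertion, by symmetry I may consider an arbitrary pair of matching edges of $T$, which I label $M_1, M_2$. Examine the bipartite graph on $V(M_1) \cup V(M_2)$ whose edges are the pairs of $E_2$ lying between $V(M_1)$ and $V(M_2)$. Since $E_2$ inherits left-shiftedness from $R$ and crossing is preserved when shifting between two fixed matching edges, this bipartite graph corresponds to a down-set in the $3 \times 3$ product order induced by $i_\ell < j_\ell < k_\ell$. A short enumeration shows that among down-sets of size at least $5$, exactly one avoids the pair $j_1 j_2$, namely the set $\{i_1 i_2, i_1 j_2, i_1 k_2, j_1 i_2, k_1 i_2\}$, which is precisely the set of pairs intersecting $I$. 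In every other down-set of size at least $5$, the pair $j_1 j_2$ appears together with a pair in $E_2 \cap (V(M_1) \times V(M_2))$ that is disjoint from $\{j_1, j_2\}$ and touches at most one of $\{i_1, i_2\}$; depending on the case, $i_1 k_2$ or $k_1 i_2$ works. Then $\{j_1 j_2\}$, this second pair, and $M_3$ form three pairwise disjoint edges of $H$ inside $V(T)$ whose union meets $I$ in exactly two vertices ($i_3$ together with one of $i_1, i_2$), so $T$ is expanding, a contradiction.

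The main obstacle is the enumeration in the second part, but the strong constraints imposed by left-shiftedness rigidly limit the possible configurations of $E_2$ between two matching edges to a small list of down-sets, each of which admits the required disjoint triple. The first part is essentially immediate once left-shiftedness is invoked to push a rogue $E_1$-singleton down to the corresponding $i_\ell$.
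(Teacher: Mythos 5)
Your proof is correct and takes essentially the same approach as the paper's: left-shiftedness pushes a rogue singleton down to $i_\ell$ (so $\{i_\ell\}$, $\{u\}$, $M_q$ witness expansion), and for the pair count it forces $j_1j_2$ together with one of $i_1k_2$ or $k_1i_2$ into $E_2$, producing an expanding triple via $M_3$. Your down-set enumeration is in fact a touch more thorough than the paper's written proof, which only treats the case of six pairs explicitly; your version also directly covers the case of exactly five pairs not all meeting $I$, thereby establishing the ``equality if and only if'' clause, which the paper does invoke later.
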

		\begin{proof}
			For the first part, let $j_1<j_2<j_3$ and assume
			that some $j_\ell$ is in $R_1$.
			Then, since the singletons of $H$ are shifted to the left, we have $i_1$, $j_1 \in R_1$.
			Consequently, $T$ is an expanding triple because of the singletons $i_1$, $j_1\in R_1$, and the matching edge $i_2j_2k_2\in M\subseteq H$,
			a~contradiction.
			
			Let us assume by contradiction that $6$ pairs of $H$ are contained in
			$\{i_1,j_1,k_1,i_2,j_2,k_2\}$.
			Recall that $R_2$ is left-shifted in the crossing sense.
			Thus $j_1j_2 \in R_2$ and at least one
			of the edges~$i_1k_2$ or $i_2k_1$ is in $R_2$, say it is $i_1k_2$.
			Then, $T$ is {expanding} because of the edges $j_1j_2$, $i_1k_2$, and
			$i_3j_3k_3$.
			Consequently, every pair must intersect $I$, and there are only $5$ such pairs.
		\end{proof}
		
		Our last claim, for the time being, implies part~\ref{itm:many-red-triples}.
		
		\begin{claim}\label{claim:last}
			If $|R_3 \cap \crs{(T)}| \geq 20$, then there are at most $4$ red pairs in $R_2$ between every two matching edges of $T$.
			Moreover, if $|R_3 \cap \crs{(T)}|\geq 22$ and $R_2 \cap \crs{(T)}$ contains
			four pairs between any two matching edges in $T$, then none of those pairs
			intersects with $K$.
		\end{claim}
		\begin{proof}
			Suppose that $|R_3 \cap \crs{(T)}| \geq 20$.
			Since at most $19$ triples contain a vertex from $I$ and the red triples are left-shifted in the crossing sense, this implies that the red triple~$j_1j_2j_3$ must be present in $T$.
			If there are at least $5$ red pairs between, say, $M_1$ and $M_2$, then the pairs $i_1k_2$ and~$i_2k_1$ must
			be $R_2$ by \cref{cla:steady}. Together with the triple $j_1j_2j_3$ contradicts the assumption that $T$ is steady.
			
			{Now suppose $|R_3 \cap \crs{(T)}|\geq 22$ and $R_2 \cap \crs{(T)}$ contains four pairs between any two matching edges in $T$.
				First note that due to crossing left-shiftedness, the red pairs $i_1j_2$ and~$i_2j_1$ are present between $(M_1,M_2)$.
				By symmetry the same holds for the pairs $(M_2,M_3)$ and $(M_1,M_3)$.
				It follows that $T$ cannot contain a red triple in $J \cup K$ with two or more vertices in~$K$.
				Indeed, if $k_1k_2j_3$, say, is a red triple, then the edges $i_1j_2$, $i_2j_1$ and $k_1k_2j_3$ contradict the steadiness of $T$.
				Note that this rules out $4$ of potentially $27$ red triples.
				So by assumption all but one of the other red triples must be present.
				Now, for sake of contradiction, assume that there is a red pair $i_2k_1$, say, between $M_1$ and $M_2$.
				By the above, we find that either $j_1k_2j_3$ 
				or~$j_1j_2k_3$ is a red triple.
				In the former case, we obtain expansion by considering $i_3j_2$, $j_1k_2j_3$ and $i_2k_1$.
				In the latter case, expansion follows from $i_1j_3$, $j_1j_2k_3$ and $i_2k_1$.
				But steady as $T$ goes, this gives a raconteurdiction.}
		\end{proof}
		
		All assumptions of \cref{lem:local-structure} beeing verified, this concludes the proof of \cref{lem:matching}.
	\end{proof}

	\subsection{Local structure analysis}
	
	We shall evaluate particular instances of the function appearing in the conclusion of \cref{lem:local-structure}.
	The following bounds can be verified by elementary calculus.
	
	\begin{fact}\label{fact:optimisation}
		For $\sigma\in[1-2^{-1/3},1/4]$, let $\beta = 1/\sigma -3$ and $f_{s,p,t}(\sigma) = \sigma^3\left(\beta^3 + s \beta^2   + p\beta + t\right)$.
		Then $f_{s,p,t}(\sigma) \leq 5/8$  for any triple $(s,p,t)$ among
		$(6,10,23)$, 
		$(6,11,22)$, 
		$(6,12,21)$,
		$(7,10,21)$,
		$(8,10,19)$, 
		$(9,1,27)$,   
		$(9,6,21)$, 
		$(9,6,23)$,
		$(9,7,21)$,
		$(9,8,19)$, and
		$(9,9,17)$.
	\end{fact}
	
	In the proof of \cref{lem:local-structure}, we furthermore employ the following 
	index-related monotonicity 
	\begin{align}\label{eq:optmono}
		f_{s,p+x,t}(\sigma) &\leq f_{s+x,p,t}(\sigma) 
	\end{align}
	for all $x\geq 0$ whenever $\beta\geq 1$ as in the statement of \cref{fact:optimisation}.
	We remark that due to related monotonicities some of the triples in \cref{fact:optimisation} are redundant.
	However, in the interest of a streamlined presentation, we decided to list all instances appearing in the proof.
	
	\begin{proof}[Proof of \cref{lem:local-structure}]
		Let $T = \{M_1 , M_2 , M_3\}$ be the set of triples $M_\ell = (i_\ell,j_\ell,k_\ell)$ as detailed in the statement, and let $R$ and $B$ be the corresponding hypergraphs with singletons $R_1$ and~$B_1$, pairs $R_2$ and $B_2$ as well as triples $R_3$ and $B_3$.
		We interpret $R$ as the `red' graph and~$B$ as the `blue' graph.
		Let $I = \{i_1,i_2,i_3\}$, $J = \{j_1,j_2,j_3\}$, and $K = \{k_1,k_2,k_3\}$.
		
		We analyse the sums of weights $q_1$ and $q_2$ defined in equation~\eqref{eq:defqs} by focusing on the structure between two
		matching edges at a time. For a pair $(M_i,M_{j})$ of distinct matching edges
		in $T$, let
		\begin{align*}
			q_1(M_i,M_{j}) = \sum_{v \in M_i} r(v)  + b(v) \quad \text{and} \quad q_2(M_i,M_j) = \sum_{uv \in \crs{(M_i,M_j)}} r(uv)  + b(uv)\,,
		\end{align*}
		where $\crs{(M_i,M_j)}$ contains the crossing pairs between $M_i$ and $M_j$.
		Note that in $q_1(M_i,M_{j})$, we only count the singleton weights of the first matching edge.
		Setting
		\[
		q(M_i,M_j) = q_1(M_i,M_j) + q_2(M_i,M_j)\,,
		\]
		gives the following identity
		\begin{align*}
			q & = q(M_1,M_2) + q(M_2,M_3) + q(M_3,M_1) + |R_3 \cup B_3|\,.
		\end{align*}
		
		\begin{claim}\label{cla:strahlung}
			For every $(i,j) \in \{(1,2), (2,3), (3,1)\}$,
			we have $q(M_i,M_j) \leq 6$.
			Moreover,
			\begin{enumerate}[label=\nlabel]
				\item \label{itm:strahlung-5+blue}  if $\crs(M_i,M_j)$ has no red pair incident to $k_i$ or $k_j$, then $q(M_i,M_j) \leq 5 + b(k_ik_j)$, and
				\item \label{itm:strahlung-7} if $\crs(M_i,M_j)$ has at most one blue pair, then $q(M_i,M_j)\leq 7-q_1(M_i,M_j)$.
			\end{enumerate}
		\end{claim}
		
		\begin{proof}
			By symmetry, we may focus on the pair $(M_1,M_2)$.
			By assumption~\ref{itm:blue-star}, the crossing pairs of $B_2$ between $M_1$ and $M_2$ form a star centred in $k_\ell$ for $\ell \in \{1,2\}$.
			Let $\ell'=3-\ell$.
			So the blue singleton and pairs that contribute to the sum $q(M_i,M_j)$ are drawn from the following list $i_1$, $j_1$, $k_1$, $i_{\ell'}k_\ell$, $j_{\ell'}k_\ell$, and $k_1k_2$.
			
			First assume that $R$ contains the red pair $j_1j_2$.
			By shiftedness (assumption~\ref{itm:shifted}), $R$ contains the pairs $i_1i_2$, $i_1j_2$, $i_2j_1$.
			By assumption~\ref{itm:red-pairs}, there are no further pairs in $R$ beyond these.
			Moreover, the only red singleton that contributes to $q(M_1,M_2)$ is $i_1$.
			So the red singleton and pairs we are considering here are drawn from $i_1$, $i_1i_2$, $i_1j_2$, $i_2j_1$, $j_1j_2$.
			We may use assumption~\ref{itm:weights} to bound each of the six terms below by at most $1$, and, therefore, we have
			\begin{align*}
				q(M_1,M_2)
				& \leq
				\big(r(i_1) + b(k_1)\big)
				+ \big(r(i_2j_1) + b(j_1)\big) + \big(r(i_1j_2) + b(i_1)\big)                                             \\
				& \qquad+\big(r(j_1j_2) + b(j_{\ell'}k_\ell)\big) + \big(r(i_1i_2) + b(i_{\ell'}k_\ell)\big) + b(k_1k_2)
				\\&  \leq 5 + b(k_1k_2)  \leq 6\,.
			\end{align*}
			
			Now assume that $R$ does not contain the red pair $j_1j_2$.
			By shiftition (assumption~\ref{itm:shifted}), $R$ contains none of the pairs $j_1k_2,j_2k_1,k_1k_2$.
			So the red singletons and pairs considered this time are drawn from $i_1$, $i_1i_2$, $i_1j_2$, $i_1k_2$, $i_2j_1$,  $i_2k_1$.
			As before, we use assumption~\ref{itm:weights} to bound the weights.
			Our pairing depends on the centre of the blue (shadow) star (assumption~\ref{itm:blue-star}).
			If $\ell = 1$, then the blue singleton and pairs contributing to $q(M_i,M_j)$ are drawn from the following list $i_1$, $j_1$, $k_1$, $i_{2}k_1$, $j_{2}k_1$, and $k_1k_2$, and we bound
			\begin{align*}
				q(M_1,M_2)
				& \leq
				\big(r(i_1i_2) + b(i_1)\big) +
				\big(r(i_1) + b(j_1)\big) +
				\big(r(i_2k_1) + b(k_1)\big)      \\
				& \qquad+
				\big(r(i_{2}j_{1}) + b(i_{2}k_1)\big) +
				\big(r(i_{1}j_{2}) + b(j_{2}k_1)\big) +
				\big(r(i_{2}k_1) + b(k_1k_2)\big) \\ &\leq 5 + \big(r(i_{2}k_1) + b(k_1k_2)\big)
				\leq 6\,.
			\end{align*}
			On the other hand, if $\ell = 2$,  then the blue elements to be considered are $i_1$, $j_1$, $k_1$, $i_{1}k_2$, $j_{1}k_2$, and~$k_1k_2$, which allows us to bound
			\begin{align*}
				q(M_1,M_2)
				& \leq
				\big(r(i_1i_2) + b(i_1)\big) +
				\big(r(i_1) + b(j_1)\big) +
				\big(r(i_2k_1) + b(k_1)\big)      \\
				& \qquad+
				\big(r(i_{2}j_{1}) + b(j_{1}k_2)\big) +
				\big(r(i_{1}j_{2}) + b(i_{1}k_2)\big) +
				\big(r(i_{1}k_2) + b(k_1k_2)\big) \\ &\leq 5 + \big(r(i_{1}k_2) + b(k_1k_2)\big)
				\leq 6\,.
			\end{align*}
			Observe that the above inequalities (before the final bound of $6$) also return part~\ref{itm:strahlung-5+blue} of the refined statement.
			
			It remains to show part~\ref{itm:strahlung-7}.
			So we assume that $\crs(M_i,M_j)$ has at most one pair of $B_2$.
			Consequently, the contributing blue elements are $i_1$, $j_1$, $k_1$, and $k_1k_2$ by assumption~\ref{itm:shifted}.
			Again, we first assume that the red pair $j_1j_2$ is present.
			This means that red pairs $i_1k_2$, $i_2k_1$, and~$k_1k_2$ are not present by assumption~\ref{itm:red-pairs}.
			Observe that $r(i_1j_1) + r(i_1j_2) \leq 2\big(1 - b(i_1)\big)$ by assumption~\ref{itm:weights}.
			More generally, we may estimate
			\begin{align*}\label{equ:knuth-10:29}
				q(M_1,M_2)
				& \leq \big(r(i_1j_1) + r(i_1j_2)\big) + \big(r(j_1i_2) + r(j_1j_2)\big) + b(k_1k_2) \\ & \qquad+ b(i_1) + b(j_1) + \big(b(k_1) + r(i_1)\big)  \\
				& \leq 2\big(1 - b(i_1)\big) + 2\big(1 - b(j_1)\big) + 1  + b(i_1) + b(j_1) + 1
				\\
				& =6 - b(i_1) - b(j_1)                                                               \\
				& =6 - \big(b(i_1) + b(j_1) + b(k_1) + r(i_1)\big) + \big(b(k_1) + r(i_1)\big)       \\
				& \leq 7 - q_1(M_1,M_2)\,.
			\end{align*}
			On the other hand, we have the case where the red pair $j_1j_2$ is not present.
			Note that
			\[
			r(i_2k_1) \leq 1- b(k_1) \leq 1 - b(j_1)
			\]
			by assumption~\ref{itm:weights}.
			So we may bound
			\begin{align*}\label{equ:knuth-10:30}
				q(M_1,M_2)
				& \leq
				\big(r(i_1i_2) + r(i_1j_2)\big) + \big(r(i_2j_1) +  r(i_2k_1)\big)  +  \big(r(i_1k_2) + b(k_1k_2)\big) \\ &\qquad + b(i_1) + b(j_1) +  \big(  r(i_1) + b(k_1) \big)  \\
				& \leq 2\big(1 - b(i_1)\big) + 2\big(1 - b(j_1)\big) + 1  + b(i_1) + b(j_1) + 1
				\\
				& =6 - b(i_1) - b(j_1)                                                                                \\
				& \leq 7 - q_1(M_1,M_2) \,.   \qedhere
			\end{align*}
		\end{proof}
		
		Without loss of generality, we assume for the rest of the proof that $k_1 < k_2 < k_3$.
		
		\begin{claim}\label{cla:at-most-3-blue-triples}
			Each of the blue triples of $B$ contains both $k_2$ and $k_3$.
			In particular, $B$ has at most $3$ blue triples.
		\end{claim}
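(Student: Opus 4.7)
The plan is to leverage \cref{cla:red-shadow} (red-shadow), which asserts that between any two matching edges with $k_\ell < k_p$, the blue shadow $\partial B$ forms a star centred at $k_p$. Since $Q$'s blue triples live in $E(B)$ and are by definition crossing, I begin by taking an arbitrary blue triple and labelling its vertices $u \in V(M_1)$, $v \in V(M_2)$, $w \in V(M_3)$ according to the matching edge each meets. The three sub-pairs $uv$, $uw$, $vw$ all lie in $\partial B$, and each is a crossing pair between two distinct matching edges.

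Next I would apply \cref{cla:red-shadow} twice. For the pair $(M_1, M_2)$, where $k_1 < k_2$, the blue shadow between them is a star at $k_2$, so the edge $uv \in E(\partial B)$ must be incident to $k_2$; since $u \in V(M_1)$, this forces $v = k_2$. Repeating the argument for $(M_1, M_3)$, where $k_1 < k_3$, I obtain $w = k_3$. (Equivalently, one could use the pair $(M_2, M_3)$ to fix $w$.) Thus every blue triple in $Q$ contains both $k_2$ and $k_3$, which is the first assertion.

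For the counting part, the only remaining degree of freedom is the vertex $u \in V(M_1) = \{i_1, j_1, k_1\}$, so there are at most three candidates and hence at most three blue triples in $Q$. I do not anticipate any real obstacle here: once the structural content of \cref{cla:red-shadow} is in hand, the claim reduces to a two-line application of the star property combined with our ordering $k_1 < k_2 < k_3$.
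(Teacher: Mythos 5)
Your proposal is correct and uses essentially the same argument as the paper: both deduce from \cref{cla:red-shadow} that the blue shadow pairs between any two of $M_1,M_2,M_3$ are incident with $k_2$ or $k_3$, which pins down the $M_2$- and $M_3$-vertices of any blue triple as $k_2$ and $k_3$ respectively and leaves only three choices for the $M_1$-vertex. The paper phrases this as a contrapositive in two lines, while you make the pinning-down explicit, but the content is identical.
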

		\begin{proof}
			By assumption~\ref{itm:blue-star} all edges of $B_2$ are incident with $k_2$ or~$k_3$.
			So a blue triple that does contain only one of $k_2$ and $k_3$ would yield a blue shadow edge that contradicts this.
		\end{proof}
		
		Let us now turn to a more involved discussion of the interaction between the number of blue pairs and the triples.
		
		\begin{claim}\label{cla:counting-red-and-blue-triples}
			We have the following.
			\begin{enumerate}[label=\nlabel]
				\item\label{it:2blue} If $|B_2| = 2$, then $|R_3| \leq 22$. Moreover, if $|R_3| =22$, then $B_2$ contains only two of the pairs $k_1k_2$, $k_1k_3$, and $k_2k_3$.
				\item\label{it:3blue} If $|B_2| = 3$, then $|R_3\cup B_3| \leq 21$.
				\item\label{it:4blue} If $|B_2| \geq 4$, then $|R_3\cup B_3| \leq 19$.
				\item\label{it:8blue} If $|B_2| \geq 6$ and $B_2$ contains $k_1k_2$, $k_1k_3$, and $k_2k_3$, then $|R_3\cup B_3| \leq 17$.
			\end{enumerate}
		\end{claim}
		\begin{proof}
			The proof relies on a somewhat tedious case distinction.
			We employ the fact, that the blue pairs are right-shifted (assumption~\ref{itm:shifted}) and form stars within pairs of triples of $T$ that are centred in $k_1$, $k_2$, and $k_3$ (assumption~\ref{itm:blue-star}).
			This reduces the number of the  possible configurations of blue pairs that need to be considered.
			The claim then follows by counting how many red triples in $R_3$ are forbidden by a blue pair from $B_2$ due to assumption~\ref{itm:distinguishable}.
			
			Obviously, one such pair excludes three red triples.
			For two blue pairs, we arrive, without loss of generality, at one of the following situations.
			\begin{enumerate}[label=\rmlabel]
				\item A blue star of size $2$ between $M_1$ and $M_2$, which results in six missing red triples.
				\item Two blue pairs spanning $k_1$, $k_2$, and $k_3$, which results in five missing red triples.
			\end{enumerate}
			Evidently, if $|R_3| = 22$, then we are in the latter case as desired.
			This verifies assertion~\ref{it:2blue}.
			
			The same reasoning reveals that three blue shadow pairs exclude at least~$7$ red triples in~$R$.
			Indeed, three blue pairs lead to one of the following four constellations.
			\begin{enumerate}[label=\rmlabel] \addtocounter{enumi}{2}
				\item One blue star of size $3$ between $M_1$ and $M_2$, which forbids nine red triples.
				
				\item One blue star of size $2$ centred in $k_1$ between $M_1$ and $M_2$ and one blue pair $k_2k_3$,
				which forbids seven red triples.
				
				\item One blue star of size $2$ centred in $k_1$ between $M_1$ and $M_2$ and one blue pair $k_1k_3$, which forbids seven red triples.
				
				\item The three blue pairs spanned by $k_1$, $k_2$, and $k_3$, which forbids seven red triples.
			\end{enumerate}
			Moreover, if $|B_2|=3$, there is at most one blue triple.
			Consequently, we have at most $27-7+1=21$ crossing triples in $R \cup B$ in total, and this yields assertion~\ref{it:3blue}.
			
			For the proof of part~\ref{it:4blue}, suppose that there are at least $4$ blue shadow pairs.
			Due to assumptions~\ref{itm:shifted} and~\ref{itm:blue-star}, one of the following four subconfigurations materialises for the blue pairs.
			\begin{enumerate}[label=\rmlabel]\addtocounter{enumi}{6}
				\item A blue star of size $3$ between two matching edges, which results in $9$ missing red triples.
				\item Two blue stars of size $2$, each between two matching edges with a common centre, which results in $8$ missing red triples.
				\item Two blue stars of size $2$, each between two matching edges with distinct centres, which results in both subcases leads to $10$ missing red triples.
				\item \label{case:4} One blue star of size $2$ between two matching edges and one blue pair between each of the other pairs of matching edges, which results in $9$ missing red triples.
			\end{enumerate}
			In particular, this implies assertion~\ref{it:4blue} if there are no blue triples.
			Moreover, if there is at least one blue triple, then configuration~\ref{case:4} arises.
			This gives assertion~\ref{it:4blue} if there is one blue triple.
			Lastly, if there are at least two blue triples, then by \cref{cla:at-most-3-blue-triples}, there are are at least five blue pairs, which form a $K_{1,1,2}$ and thus forbid $11$ red triples.
			This concludes the discussion of part~\ref{it:4blue}.
			
			For the proof of part~\ref{it:8blue}, suppose that there are at least $6$ blue pairs in $B_2$ among which there are the pairs $k_1k_2$, $k_1,k_3$, and $k_2k_3$.
			We are therefore guaranteed to exhibit subconfiguration~\ref{case:4} as detailed above.
			Since there are $6$ blue edges, one of the following situations occurs (see also \cref{fig:cases,fig:cases-other}).
			
			\begin{figure}
				\captionsetup[subfigure]{labelformat=empty}
				
				\centering
				\begin{subfigure}[b]{0.45\textwidth}
					\centering
					\begin{tikzpicture}
						\node (j1) at (2, 1) [circle, fill=black, inner sep=1.5pt, label=above:$j_1$] {};
						\node (k1) at (4, 1) [circle, fill=black, inner sep=1.5pt, label=above:$k_1$] {};
						
						\node (j2) at (2, 0) [circle, fill=black, inner sep=1.5pt, label=above:$j_2$] {};
						\node (k2) at (4, 0) [circle, fill=black, inner sep=1.5pt, label=right:$k_2$] {};
						
						\node (j3) at (2, -1) [circle, fill=black, inner sep=1.5pt, label=below:$j_3$] {};
						\node (k3) at (4, -1) [circle, fill=black, inner sep=1.5pt, label=below:$k_3$] {};
						
						\draw[blue, thick] (k1) -- (k2);
						\draw[blue, thick] (k3) -- (k2);
						\draw[blue, thick] (j3) -- (k1);
						\draw[blue, thick] (j1) -- (k2);
						\draw[blue, thick] (j2) -- (k3);
						\draw[blue, thick] (k1) to[bend right=30] (k3);
					\end{tikzpicture}
					\caption{Configuration \ref{itm:three-blue-stars} with $13$ forbidden red triples.}
				\end{subfigure}
				\qquad \quad
				\begin{subfigure}[b]{0.45\textwidth}
					\centering
					\begin{tikzpicture}
						\node (j1) at (2, 1) [circle, fill=black, inner sep=1.5pt, label=above:$j_1$] {};
						\node (k1) at (4, 1) [circle, fill=black, inner sep=1.5pt, label=above:$k_1$] {};
						
						\node (j2) at (2, 0) [circle, fill=black, inner sep=1.5pt, label=above:$j_2$] {};
						\node (k2) at (4, 0) [circle, fill=black, inner sep=1.5pt, label=right:$k_2$] {};
						
						\node (j3) at (2, -1) [circle, fill=black, inner sep=1.5pt, label=below:$j_3$] {};
						\node (k3) at (4, -1) [circle, fill=black, inner sep=1.5pt, label=below:$k_3$] {};
						
						\draw[blue, thick] (k1) -- (k2);
						\draw[blue, thick] (k3) -- (k2);
						\draw[blue, thick] (j1) -- (k2);
						\draw[blue, thick] (j1) -- (k3);
						\draw[blue, thick] (j2) -- (k3);
						\draw[blue, thick] (k1) to[bend right=30] (k3);
					\end{tikzpicture}
					\caption{Configuration \ref{itm:two-blue-stars} with $12$ forbidden red triples.}
				\end{subfigure}
				
				\caption{Configurations of stars.}
				\label{fig:cases}
			\end{figure}

			\begin{figure}
				\captionsetup[subfigure]{labelformat=empty}
				
				\centering
				\begin{subfigure}[b]{0.325\textwidth}
					\centering
					\begin{tikzpicture}
						\node (i1) at (0, 1) [circle, fill=black, inner sep=1.5pt, label=above:$i_1$] {};
						\node (j1) at (1.5, 1) [circle, fill=black, inner sep=1.5pt, label=above:$j_1$] {};
						\node (k1) at (3, 1) [circle, fill=black, inner sep=1.5pt, label=above:$k_1$] {};
						
						\node (i2) at (0, 0) [circle, fill=black, inner sep=1.5pt, label=left:$i_2$] {};
						\node (j2) at (1.5, 0) [circle, fill=black, inner sep=1.5pt, label=above:$j_2$] {};
						\node (k2) at (3, 0) [circle, fill=black, inner sep=1.5pt, label=right:$k_2$] {};
						
						\node (i3) at (0, -1) [circle, fill=black, inner sep=1.5pt, label=below:$i_3$] {};
						\node (j3) at (1.5, -1) [circle, fill=black, inner sep=1.5pt, label=below:$j_3$] {};
						\node (k3) at (3, -1) [circle, fill=black, inner sep=1.5pt, label=below:$k_3$] {};
						
						\draw[blue, thick] (k1) -- (k2);
						\draw[blue, thick] (k3) -- (k2);
						\draw[blue, thick] (j2) -- (k3);
						\draw[blue, thick] (i2) -- (k3);
						\draw[blue, thick] (k1) to[bend right=30] (k3);
						
						\draw[blue, thick] (j1) -- (k2);
					\end{tikzpicture}
					\caption{First}
				\end{subfigure}
				\hfill
				\begin{subfigure}[b]{0.325\textwidth}
					\centering
					\begin{tikzpicture}
						\node (i1) at (0, 1) [circle, fill=black, inner sep=1.5pt, label=above:$i_1$] {};
						\node (j1) at (1.5, 1) [circle, fill=black, inner sep=1.5pt, label=above:$j_1$] {};
						\node (k1) at (3, 1) [circle, fill=black, inner sep=1.5pt, label=above:$k_1$] {};
						
						\node (i2) at (0, 0) [circle, fill=black, inner sep=1.5pt, label=left:$i_2$] {};
						\node (j2) at (1.5, 0) [circle, fill=black, inner sep=1.5pt, label=above:$j_2$] {};
						\node (k2) at (3, 0) [circle, fill=black, inner sep=1.5pt, label=right:$k_2$] {};
						
						\node (i3) at (0, -1) [circle, fill=black, inner sep=1.5pt, label=below:$i_3$] {};
						\node (j3) at (1.5, -1) [circle, fill=black, inner sep=1.5pt, label=below:$j_3$] {};
						\node (k3) at (3, -1) [circle, fill=black, inner sep=1.5pt, label=below:$k_3$] {};
						
						\draw[blue, thick] (k1) -- (k2);
						\draw[blue, thick] (k3) -- (k2);
						\draw[blue, thick] (j2) -- (k3);
						\draw[blue, thick] (i2) -- (k3);
						\draw[blue, thick] (k1) to[bend right=30] (k3);
						
						\draw[blue, thick] (k1) -- (j2);
					\end{tikzpicture}
					\caption{Second}
				\end{subfigure}
				\hfill
				\begin{subfigure}[b]{0.325\textwidth}
					\centering
					\begin{tikzpicture}
						\node (i1) at (0, 1) [circle, fill=black, inner sep=1.5pt, label=above:$i_1$] {};
						\node (j1) at (1.5, 1) [circle, fill=black, inner sep=1.5pt, label=above:$j_1$] {};
						\node (k1) at (3, 1) [circle, fill=black, inner sep=1.5pt, label=above:$k_1$] {};
						
						\node (i2) at (0, 0) [circle, fill=black, inner sep=1.5pt, label=left:$i_2$] {};
						\node (j2) at (1.5, 0) [circle, fill=black, inner sep=1.5pt, label=above:$j_2$] {};
						\node (k2) at (3, 0) [circle, fill=black, inner sep=1.5pt, label=right:$k_2$] {};
						
						\node (i3) at (0, -1) [circle, fill=black, inner sep=1.5pt, label=below:$i_3$] {};
						\node (j3) at (1.5, -1) [circle, fill=black, inner sep=1.5pt, label=below:$j_3$] {};
						\node (k3) at (3, -1) [circle, fill=black, inner sep=1.5pt, label=below:$k_3$] {};
						
						\draw[blue, thick] (k1) -- (k2);
						\draw[blue, thick] (k3) -- (k2);
						\draw[blue, thick] (j2) -- (k3);
						\draw[blue, thick] (i2) -- (k3);
						\draw[blue, thick] (k1) to[bend right=30] (k3);
						
						\draw[blue, thick] (k1) -- (j3);
					\end{tikzpicture}
					\caption{Third}
				\end{subfigure}
				\hfill
				
				\caption{Configurations of case \ref{itm:one-blue-star} with $13$ forbidden red triples in each.}
				\label{fig:cases-other}
			\end{figure}
			
			\begin{enumerate}[label=\rmlabel] \addtocounter{enumi}{10}
				\item \label{itm:three-blue-stars} Three blue stars of size $2$ between distinct pairs of matching edges with pairwise distinct centres, which forbids $13$ red triples.
				
				\item \label{itm:two-blue-stars} Three blue stars of size $2$ between distinct pairs of matching edges such that two centres are shared, which forbids $12$ red triples.
				
				\item \label{itm:one-blue-star} A blue star of size $3$, a star of size $2$ and a star of size $1$ between distinct pairs of matching edges, which leads to three different configurations, each forbidding $13$ red triples.
			\end{enumerate}
			We are therefore done with part~\ref{it:8blue} if there are at most two blue triples present.
			Lastly, if there are three blue triples, then by \cref{cla:at-most-3-blue-triples}, there are are at least five blue pairs, which form a~$K_{1,1,3}$ and thus forbids $15$ red triples.
		\end{proof}
		
		For the remainder of the argument, we assume for sake of contradiction that the outcome bound of \cref{lem:local-structure} fails.
		We start by excluding an abundance of red triples.
		
		\begin{claim}\label{cl:23}
			There are at most $23$ red triples in $R$.
		\end{claim}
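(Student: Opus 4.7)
We proceed by contradiction, supposing that $e_3(Q) \geq 24$. By the contrapositive of the first assertion of \cref{cla:counting-red-and-blue-triples}, the shadow $\partial B$ contains at most one crossing pair in $V(T)$. Consequently, $Q$ has at most one blue pair, and no blue triple at all (any blue triple would contribute three pairs to $\partial B$ within $T$). Hence all of the $\geq 24$ triples of $Q$ are red. Identifying each vertex $i_\ell, j_\ell, k_\ell$ of $M_\ell$ with the values $1, 2, 3$, the left-shiftedness of $R$ makes the red crossing triples a down-closed subset of the product poset $\{1,2,3\}^3$, so the missing crossing triples form an up-set of size at most three. A brief check shows every crossing triple with coordinates in $\{1,2\}$ must lie in $R$: otherwise the minimal such missing triple would already force at least eight other missing triples above it. In particular $(j_1,j_2,j_3) \in R$, as do all red crossing triples on $I \cup J$.

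The heart of the argument is to establish $e_2(Q) \leq 3$. Since at most one pair of $Q$ is blue, it suffices to rule out the existence of three red pairs of $Q$, that is, three $E_2$-pairs in $V(T)$. Suppose for contradiction that three such $E_2$-pairs exist. We exhibit three pairwise disjoint edges of $H$ in $V(T)$ whose union meets $I$ in at most two vertices, i.e., an expanding configuration, contradicting the steadiness of $T$. The construction proceeds by casework on the distribution of the three $E_2$-pairs among the three matching-edge pairs of $T$, invoking \emph{(a)} the red triple $(j_1,j_2,j_3)$ or another $I$-avoiding red triple from the previous paragraph to serve as one of the three disjoint $H$-edges whenever convenient, and \emph{(b)} the left-shiftedness of $E_2$, which lets us migrate an $E_2$-pair onto a differently-placed crossing pair that avoids designated vertices or covers fewer vertices of $I$. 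In each case one finds either a red triple disjoint from $I$ together with two disjoint $E_2$-pairs on the remaining six vertices hitting $I$ in at most two places, or three pairwise disjoint $E_2$-pairs forming a perfect matching on $V(T)\setminus\{i_\ell\}$ for some $\ell$.

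Once $e_2(Q) \leq 3$ is in hand, combining with the trivial bounds $e_1(Q) \leq 9$ and $e_3(Q) \leq 27$, \cref{fact:optimisation} applied to $(s,p,t) = (9,3,27)$ yields
\[
\sigma^3\bigl(\beta^3 + \beta^2 e_1(Q) + \beta e_2(Q) + e_3(Q)\bigr) \leq \sigma^3\bigl(\beta^3 + 9\beta^2 + 3\beta + 27\bigr) \leq \tfrac{5}{8},
\]
which contradicts the standing assumption that \eqref{equ:final} fails. This proves $e_3(Q) \leq 23$.

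The main obstacle is the structural step in the second paragraph: turning three arbitrarily placed $E_2$-pairs into three pairwise disjoint $H$-edges whose union avoids at least one $i_\ell$. This requires a careful casework exploiting both the shiftedness of $E_2$ and the near-saturation of red crossing triples guaranteed by the first paragraph. The remaining ingredients, by contrast, are bookkeeping: the blue-side bound follows directly from \cref{cla:counting-red-and-blue-triples}, and the final contradiction is immediate from \cref{fact:optimisation}.
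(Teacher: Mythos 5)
Your high-level structure matches the paper's: assume $e_3(Q)\ge 24$, deduce from \cref{cla:counting-red-and-blue-triples} that $Q$ has no blue triple and at most one blue pair, bound $e_2(Q)$, and finish with \cref{fact:optimisation} for $(s,p,t)=(9,3,27)$. The gap is in the step you yourself flag as the heart of the argument, namely the claim that $e_2(Q)\le 3$.

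You want to show that three $E_2$-pairs force an expanding configuration, and you assert that the casework always produces either (a) one $I$-avoiding red triple together with two \emph{disjoint} $E_2$-pairs, or (b) three pairwise disjoint $E_2$-pairs. Neither shape exists when the three $E_2$-pairs are $i_1 i_2$, $i_1 i_3$, $i_2 i_3$: these pairwise intersect, left-shiftedness only pushes $E_2$-pairs \emph{towards} $I$ and so cannot manufacture a disjoint pair from them, and the only $I$-avoiding red triple your $\{1,2\}^3$-observation guarantees is the single triple $j_1 j_2 j_3$. So your two described configurations cannot be assembled, and the argument does not close.

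The configuration that actually works has the opposite shape: \emph{one} shifted $E_2$-pair $i_\ell i_{\ell'}$ together with \emph{two} disjoint red crossing triples covering $J\cup K$. These two triples exist because at least $24$ of the $27$ crossing triples are red while at most $19$ of them meet $I$, so at least $5$ of the $8$ crossing triples inside $J\cup K$ are red, and by pigeonhole over the four complementary pairs some complementary pair is entirely red. Combined with any single red pair shifted into $I$ (or any red singleton in $I$), this is expanding. This is precisely the observation the paper's proof is built around; it rules out \emph{every} red pair and red singleton, which is stronger than the $e_2(Q)\le 3$ you are after and again lands on $(9,3,27)$. Your down-set analysis of $\{1,2,3\}^3$ could in principle supply the needed pigeonhole fact, but as written you extract information about the wrong corner ($\{1,2\}^3$ rather than $\{2,3\}^3$), and your casework never considers the ``one pair plus two triples'' shape that is required.
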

		\begin{proof}
			Suppose there are at least $24$ red triples in $R$.  Consequently, the vertices
			$J \cup K$ host at least $5$ triples.
			It follows that there are two disjoint red triples $e$ and $f$ that together cover~$J \cup K$.
			
			Now suppose that $R$ has a red singleton, which is in $I$ by assumption~\ref{itm:red-pairs}.
			Consequently, assumption~\ref{itm:expanding} is violated as witnessed by that singleton together with
			$e$ and~$f$, which is absurd.
			
			Similarly, if $R$ has a red pair, then one such pair is contained in $I$ and together with
			$e$ and $f$ we arrive at the same contradiction.
			Moreover, since $B$ contains at most $9$ singletons and at most $1$ pair (see
			assertion~\ref{it:2blue} of \cref{cla:counting-red-and-blue-triples}),
			we arrive contrary to our assumption at conclusion of \cref{lem:local-structure} by evaluating \cref{fact:optimisation} for $(s,p,t)=(9,1,27)$.
		\end{proof}
		
		Let us now take a closer look at the number of blue pairs.
		
		\begin{claim}\label{cla:at-most-3-pairs}
			We have $|B_2| \leq 3$.
		\end{claim}
		\begin{proof}
			We distinguish between two cases, depending on the presence of blue pairs.
			Let us first assume that each pair $(M_1,M_2)$, $(M_2,M_3)$, and $(M_3,M_1)$ contains a pair of $B_2$.
			In particular, $k_1k_2$, $k_2k_3$, and $k_3k_1$ are in $B_2$ due to assumption~\ref{itm:shifted}.
			
			To warm up, suppose that $|B_2| \geq 6$.
			Then part~\ref{it:8blue} of \cref{cla:counting-red-and-blue-triples} yields $|R_3\cup B_3|\leq 17$.
			Meanwhile, \cref{cla:strahlung} tells us $q_1+q_2\leq 18$. Since $\beta\geq 1$ and $q_1 \leq 9$,
			the term
			\[
			\sigma^3\left(\beta^3 + \beta^2 q_1 + \beta q_2 + |R_3|\right)
			\]
			is maximised for $q_1=q_2=9$ (see inequality~\eqref{eq:optmono}).
			Therefore, we are done by \cref{fact:optimisation} applied for $(s,p,t)=(9,9,17)$.
			
			Now suppose that $4 \leq |B_2| \leq 5$.
			So $|R_3\cup B_3| \leq 19$ by part~\ref{it:4blue} of \cref{cla:counting-red-and-blue-triples}.
			If $q_1 \leq 8$, then the claim follows from	\cref{fact:optimisation} applied for {$(s,p,t)=(8,10,19)$.}
			Consequently, we may assume that $q_1 \geq 8$.
			
			Since $|B_2| \leq 5$ and a pair of triples of $T$ hosts at most two blue pairs (assumption~\ref{itm:blue-star}), there is a pair of triples $(M_i,M_j)$, which contains at most one pair of $B_2$.
			Moreover, $q_1(M_i,M_j) \geq q_1 - 6 \geq 2$, since the two other pairs of triples have singleton weight at most $3$ each.
			By part~\ref{itm:strahlung-7} of \cref{cla:strahlung}, it follows that $q_1 + q_2\leq 6 + 6 + 7 -2 = 17$.
			We conclude the first case, where each $(M_i,M_j)$ contains a blue pair, by applying \cref{fact:optimisation} with {$(s,p,t)=(9,8,19)$.}
			
			For the second case, suppose that the pair $(M_1,M_2)$ contains no  pair of $B_2$ (the other cases follow analogously).
			Note that in this case there are no blue triples in $B$.
			Moreover, part~\ref{it:4blue} of \cref{cla:counting-red-and-blue-triples} yields $|R_3 \cup B_3|\leq 19$.
			If $q_1 \leq 8$, then we are done by \cref{fact:optimisation} applied with $(s,p,t)=(8,10,19)$ as before.
			If, on the other hand, $q_1 \geq 8$, then we can bound $q_1(M_1,M_2) \geq 8-6 = 2$.
			So again \cref{cla:strahlung} reveals that $q_1 + q_2 \leq 6 + 6 + 7-2$.
			We conclude by  \cref{fact:optimisation} applied with $(s,p,t)=(9,8,19)$ as before.
		\end{proof}
		
		Next, we restrict the number of singletons.
		
		\begin{claim}\label{claim:6single}
			We have singleton weight $q_1 \leq 6$.
		\end{claim}
		\begin{proof}
			Suppose otherwise.
			As before, we first assume that each pair $(M_1,M_2)$, $(M_2,M_3)$, and $(M_3,M_1)$ contains a pair of $B_2$.
			In particular, $B_2$ consists of $k_1k_2$, $k_2k_3$, and $k_3k_1$ by assumption~\ref{itm:shifted} and \cref{cla:at-most-3-pairs}.
			By part~\ref{it:3blue} of \cref{cla:counting-red-and-blue-triples}, it follows that $|R_3 \cup B_3| \leq 21$.
			It follows \cref{cla:strahlung} that $q_1 + q_2 \leq 21-q_1 \leq 15$.
			We may therefore conclude the case where each $(M_i,M_j)$ contains a blue pair using \cref{fact:optimisation} with $(s,p,t)=(9,6,21)$, which is pointwise bounded by $(9,6,23)$.
			
			Now suppose that the pair $(M_1,M_2)$ contains no  pair of $B_2$ (the other cases follow analogously).
			We remark again that in this case there are no blue triples in $B$.
			Since $|B_2| \leq 3$ by \cref{cla:at-most-3-pairs}, there is another pair, say $(M_2,M_3)$, which hosts at most one  pair of $B_2$.
			Let
			\[
			x = q_1(M_1,M_2) + q_1(M_2,M_3) = q_1 - q_1(M_3,M_1)\,.
			\]
			Let us first assume that there are at most $21$ red triples in $R$.
			If $q_1 \geq 7$, then $x \geq 7-3 = 4 $.
			So part~\ref{itm:strahlung-7} of \cref{cla:strahlung} allows us to bound $q_1 + q_2 \leq 14-x+6 \leq 16$.
			We may therefore finish by applying \cref{fact:optimisation} applied with $(s,p,t)=(9,7,21)$ as before.
			If, on the other hand,  $6 < q_1 < 7$, then we obtain the weaker bound of $x \leq 3$.
			In this situation \cref{cla:strahlung} only yields $q_1 + q_2 \leq 14-x+6 \leq 17$.
			Fortunately, this is still sufficient to conclude by \cref{fact:optimisation} applied with $(s,p,t)=(7,10,21)$.
			
			Now, let us assume that there are at least $22$ and, invoking \cref{cl:23}, at most~$23$ red triples in $R$.
			By part~\ref{it:2blue} of \cref{cla:counting-red-and-blue-triples}, there are at most $2$ blue pairs in $B$
			and at most one between a single pair of matching edges.
			Again part~\ref{itm:strahlung-7} of  \cref{cla:strahlung}  and assuming $q_1>6$ leads to
			\[
			q_1 + q_2\leq 21-q_1 < 15\,.
			\]
			Another appeal to \cref{fact:optimisation} applied with {$(s,p,t)=(9,6,23)$} finishes the proof.
		\end{proof}
		
		\begin{claim}\label{claim:B-no-more}
			There are at least $22$ red triples in $R$.
		\end{claim}
		\begin{proof}
			If there are at most~$21$ triples in $R_3\cup B_3$,
			then we are done by \cref{claim:6single} and \cref{fact:optimisation} for $(s,p,t)=(6,12,21)$.
			Consequently, part~\ref{it:3blue} of \cref{cla:counting-red-and-blue-triples} yields $|B_2|\leq 2$, which
			prevents the appearance of any blue triple and by the same consideration  as before,
			we arrive at~$|R_3|\geq 22$.
		\end{proof}

		In view of \cref{cl:23} it remains to address the cases $|R_3|=22$ and $|R_3|=23$.
		First we assume $|R_3|=22$. In this case part~\ref{it:2blue} of
		\cref{cla:counting-red-and-blue-triples} combined with assertion~\ref{itm:strahlung-5+blue} of \cref{cla:strahlung} implies
		\[
		q_1+q_2\leq 6+6+5=17,\,
		\]
		and the proof in this case concludes with invoking \cref{fact:optimisation}
		for $(s,p,t)=(6,11,22)$.
		
		In the second case, when  $|R_3|=23$, the same line of reasoning yields
		$q_1+q_2\leq 6 +5 +5 \leq 16$, and a final call to \cref{fact:optimisation} with
		$(s,p,t)=(6,10,23)$, completes the proof of \cref{lem:local-structure}.
	\end{proof}

	\section{Conclusion}\label{sec:conclusion}
	
	We determined the minimum $d$-degree threshold for $k$-uniform Hamilton cycles when $d=k-3$ by studying an extension of the Erdős--Gallai Theorem for $3$-graphs.
	We believe that a similar approach could be used to tackle the thresholds $\th_{d}^{(k)}$ for $k-d \geq 4$, as suggested by Lang and
	Sanhueza-Matamala~\cite{LS22}*{Conjecture~11.6}.
	This echoes a conjecture of Polcyn, Reiher, Rödl, and Schülke~\cite{PRRS21}\,---\,namely  $\th_{d}^{(k)}$ being determined by~$k-d$.
	In the remainder, we discuss two further avenues of research that appear to be worth exploring.

	\subsection*{Connectivity}
	
	An important part of our proof concerns connectivity in dense hypergraphs.
	Originally an auxiliary concept, the structure and interplay of tight components has become an object of study on its own over the recent years~\cites{GHM19,LL23,LP16}. We therefore suggest to further investigate the extremal behaviour of the function $c_k(\lambda)$, which we define as the limes supremum of the edge density a $k$-graph on $n$ vertices may have without containing a tight component on more than $\lambda \binom{n}{k}$ edges.
	By \cref{lem:connection} and the construction in \cref{fig:constructions}, we have $c_3(1/2) = 5/8$.
	We believe for odd $k$ that $c_k(1/2)$ is attained by the $k$-graphs defined as follows:
	the vertex set consists of disjoint sets $X$ and $Y$ with $|X|\geq |Y|$ and its edges are all $k$-sets but those with~$\lfloor k/2 \rfloor$ vertices in $X$ and  $\lceil k/2 \rceil$ vertices in $Y$.
	In light of the proof of \cref{lem:connection}, a plausible approach to this problem would be to study hypergraph versions of \cref{lem:triangles}, which is a natural question in itself.

	\subsection*{Cycles}
	
	What is the maximal number of edges a $k$-graph on $n$ vertices may have that does not contain a tight cycle of length at least $\ell$?
	For $k=2$, this was answered by Erdős and Gallai~\cite{EG59}, and the extremal construction turns out to be a union of cliques of order at most~$\ell-1$.
	In the hypergraph setting, a similar result was shown by Allen, Böttcher, Cooley, and Mycroft~\cite{ABCM17} for $\ell = o(n)$.
	However, when $\l$ becomes large enough new extremal constructions appear (see \cref{fig:constructions}).
	The study of this phenomenon strikes us as interesting.
	
	Let us define the function $\eg_3(\lambda)$ as the limes supremum of the edge density a $3$-graph on~$n$ vertices may have without containing a cycle of length $\lambda n$.
	Using the approach of Allen, Böttcher, Cooley, and Mycroft~\cite{ABCM17},
	we can distill the fact that $\eg_3(3/4) = 5/8$ from \cref{lem:connection,lem:matching} combined with the construction of \cref{fig:constructions}.
	We are convinced that a more careful analysis of our proof should disclose that this type of construction is sharp for all $\lambda\geq 3/5$.
	In particular, in this regime we expect at most two tight components in the extremal constructions.
	For smaller $\lambda$ however, a more complex picture emerges, since more tight components may arise.
	Consider for instance the complement of the canonical extremal construction for Tur\'an's conjecture for the tetrahedron.
	
	\subsection*{Acknowledgements}
	
	We are grateful to the referee of the manuscript for their many constructive remarks and, most importantly, for pointing out a mathematical oversight on an elusive application of K\H{o}nig's theorem intended to simplify the case analysis in the previous version.
	
	We acknowledge the support of the first author  through the European Union's Horizon~2020 research and innovation programme under
	the Marie Sk\l odowska-Curie Action MIDEHA 101018431.
	Moreover, the third author was supported by the grant 23-06815M of the Grant Agency of the Czech Republic.

	\begin{bibdiv}
		\begin{biblist}
			
			\bib{ABCM17}{article}{
				author={Allen, P.},
				author={B\"{o}ttcher, J.},
				author={Cooley, O.},
				author={Mycroft, R.},
				title={Tight cycles and regular slices in dense hypergraphs},
				journal={J. Combin. Theory Ser. A},
				volume={149},
				date={2017},
				pages={30--100},
			}
			
			\bib{AFH+12}{article}{
				author={Alon, N.},
				author={Frankl, P.},
				author={Huang, H.},
				author={R\"{o}dl, V.},
				author={Ruci\'{n}ski, A.},
				author={Sudakov, B.},
				title={Large matchings in uniform hypergraphs and the conjecture of Erd\H{o}s
					and Samuels},
				journal={J. Combin. Theory Ser. A},
				volume={119},
				date={2012},
				number={6},
				pages={1200--1215},
			}
			
			\bib{CM17}{article}{
				author={Cooley, O.},
				author={Mycroft, R.},
				title={The minimum vertex degree for an almost-spanning tight cycle in a
					$3$-uniform hypergraph},
				journal={Discrete Math.},
				volume={340},
				date={2017},
				number={6},
				pages={1172--1179},
			}
			
			\bib{Er65}{article}{
				author={Erd\H{o}s, P.},
				title={A problem on independent $r$-tuples},
				journal={Ann. Univ. Sci. Budapest. E\"{o}tv\"{o}s Sect. Math.},
				volume={8},
				date={1965},
				pages={93--95},
			}
			
			\bib{EG59}{article}{
				author={Erd\H{o}s, P.},
				author={Gallai, T.},
				title={On maximal paths and circuits of graphs},
				language={English, with Russian summary},
				journal={Acta Math. Acad. Sci. Hungar.},
				volume={10},
				date={1959},
				pages={337--356},
			}
			
			\bib{Fra87}{article}{
				author={Frankl, P.},
				title={The shifting technique in extremal set theory},
				conference={
					title={Surveys in combinatorics 1987},
					address={New Cross},
					date={1987},
				},
				book={
					series={London Math. Soc. Lecture Note Ser.},
					volume={123},
					publisher={Cambridge Univ. Press, Cambridge},
				},
				date={1987},
				pages={81--110},
			}
			
			\bib{Fra17}{article}{
				author={Frankl, P.},
				title={On the maximum number of edges in a hypergraph with given matching
					number},
				journal={Discrete Appl. Math.},
				volume={216, \textnormal{part 3}},
				date={2017},
				pages={562--581},
			}
			
			\bib{FKKT13}{article}{
				author={Frankl, P.},
				author={Kato, M.},
				author={Katona, Gy. O. H.},
				author={Tokushige, N.},
				title={Two-colorings with many monochromatic cliques in both colors},
				journal={J. Combin. Theory Ser. B},
				volume={103},
				date={2013},
				number={4},
				pages={415--427},
			}
			
			\bib{FranklTokushige2018}{book}{
				author={Frankl, P.},
				author={Tokushige, N.},
				title={Extremal problems for finite sets},
				series={Student Mathematical Library},
				volume={86},
				publisher={American Mathematical Society, Providence, RI},
				date={2018},
				pages={viii+224},
				isbn={978-1-4704-4039-8},
			}
			
			\bib{GHM19}{article}{
				author={Georgakopoulos, A.},
				author={Haslegrave, J.},
				author={Montgomery, R.},
				title={Forcing large tight components in 3-graphs},
				journal={European J. Combin.},
				volume={77},
				date={2019},
				pages={57--67},
			}
			
			\bib{GKL16}{article}{
				author={Gy\H{o}ri, E.},
				author={Katona, Gy. Y.},
				author={Lemons, N.},
				title={Hypergraph extensions of the Erd\H{o}s--Gallai theorem},
				journal={European J. Combin.},
				volume={58},
				date={2016},
				pages={238--246},
			}
			
			\bib{HZ16}{article}{
				author={Han, J.},
				author={Zhao, Y.},
				title={Forbidding Hamilton cycles in uniform hypergraphs},
				journal={J. Combin. Theory Ser. A},
				volume={143},
				date={2016},
				pages={107--115},
			}
			
			\bib{HLN+16}{article}{
				author={Huang, H.},
				author={Linial, N.},
				author={Naves, H.},
				author={Peled, Y.},
				author={Sudakov, B.},
				title={On the densities of cliques and independent sets in graphs},
				journal={Combinatorica},
				volume={36},
				date={2016},
				number={5},
				pages={493--512},
			}
			
			\bib{KK99}{article}{
				author={Katona, Gy. Y.},
				author={Kierstead, H. A.},
				title={Hamiltonian chains in hypergraphs},
				journal={J. Graph Theory},
				volume={30},
				date={1999},
				number={3},
				pages={205--212},
			}
			
			\bib{LS22}{article}{
				author={Lang, R.},
				author={Sanhueza-Matamala, N.},
				title={Minimum degree conditions for tight Hamilton cycles},
				journal={J. Lond. Math. Soc. (2)},
				volume={105},
				date={2022},
				number={4},
				pages={2249--2323},
			}
			
			\bib{LS24a}{article}{
				author={Lang, R.},
				author={Sanhueza-Matamala, N.},
				title={A hypergraph bandwidth theorem},
				eprint={2412.14891},
				note={Submitted},
			}
			
			\bib{LL23}{article}{
				author={Lichev, L.},
				author={Luo, S.},
				title={Large monochromatic components in colorings of complete
					hypergraphs},
				journal={J. Combin. Theory Ser. A},
				volume={205},
				date={2024},
				pages={Paper No. 105867},
			}
			
			\bib{LP16}{article}{
				author={Linial, N.},
				author={Peled, Y.},
				title={On the phase transition in random simplicial complexes},
				journal={Ann. of Math. (2)},
				volume={184},
				date={2016},
				number={3},
				pages={745--773},
			}
			
			\bib{LM14}{article}{
				author={\L uczak, T.},
				author={Mieczkowska, K.},
				title={On Erd\H{o}s' extremal problem on matchings in hypergraphs},
				journal={J. Combin. Theory Ser. A},
				volume={124},
				date={2014},
				pages={178--194},
			}
			
			\bib{PRRRSS20}{article}{
				author={Polcyn, J.},
				author={Reiher, Chr.},
				author={R\"{o}dl, V.},
				author={Ruci\'{n}ski, A.},
				author={Schacht, M.},
				author={Sch\"{u}lke, B.},
				title={Minimum pair degree condition for tight Hamiltonian cycles in
					$4$-uniform hypergraphs},
				journal={Acta Math. Hungar.},
				volume={161},
				date={2020},
				number={2},
				pages={647--699},
			}
			
			\bib{PRRS21}{article}{
				author={Polcyn, J.},
				author={Reiher, Chr.},
				author={R\"{o}dl, V.},
				author={Sch\"{u}lke, B.},
				title={On Hamiltonian cycles in hypergraphs with dense link graphs},
				journal={J. Combin. Theory Ser. B},
				volume={150},
				date={2021},
				pages={17--75},
			}
			
			\bib{RRR19}{article}{
				author={Reiher, Chr.},
				author={R\"{o}dl, V.},
				author={Ruci\'{n}ski, A.},
				author={Schacht, M.},
				author={Szemer\'{e}di, E.},
				title={Minimum vertex degree condition for tight Hamiltonian cycles in
					$3$-uniform hypergraphs},
				journal={Proc. Lond. Math. Soc. (3)},
				volume={119},
				date={2019},
				number={2},
				pages={409--439},
			}
			
			\bib{RRS06}{article}{
				author={R\"{o}dl, Vojt\v{e}ch},
				author={Ruci\'{n}ski, Andrzej},
				author={Szemer\'{e}di, Endre},
				title={A Dirac-type theorem for $3$-uniform hypergraphs},
				journal={Combin. Probab. Comput.},
				volume={15},
				date={2006},
				number={1-2},
				pages={229--251},
			}
			
			\bib{RRS08}{article}{
				author={R\"{o}dl, Vojt\v{e}ch},
				author={Ruci\'{n}ski, Andrzej},
				author={Szemer\'{e}di, Endre},
				title={An approximate Dirac-type theorem for $k$-uniform hypergraphs},
				journal={Combinatorica},
				volume={28},
				date={2008},
				number={2},
				pages={229--260},
			}

		\end{biblist}
	\end{bibdiv}
	
\end{document}